\definecolor{lightblue}{rgb}{0.20,0.40,0.70}
\newcommand{\fl}[1]{\lfloor #1 \rfloor_{h}}
\newcommand{\R}{\mathbb{R}}
\newcommand{\N}{\mathbb{N}}
\newcommand{\E}{\mathbb{E}}
\newcommand{\und}{\underline}
\newcommand{\Id}{\mathrm{Id}}
\newcommand{\co}{\colon}
\newcommand{\F}[0]{\ensuremath{\mathcal{F}}}
\newcommand{\one}[0]{\ensuremath{\mathbbm{1}}}
\newcommand{\smallsum}{\textstyle\sum}
\renewcommand{\P}{\mathbb{P}}
\newcommand{\lf}{\lfloor}
\newcommand{\rf}{\rfloor}
\newcommand{\Y}{\mathcal{X}}
\newcommand\numberthis{\addtocounter{equation}{1}\tag{\theequation}}
\theoremstyle{plain}
\newtheorem{theorem}{Theorem}[section]                                          
\newtheorem{prop}[theorem]{Proposition}                          
\newtheorem{lemma}[theorem]{Lemma}
\newtheorem{cor}[theorem]{Corollary}
\theoremstyle{definition}
\newtheorem{sett}[theorem]{Setting}
\theoremstyle{remark}
\let\inf\relax \DeclareMathOperator*\inf{\vphantom{p}inf}
\let\lim\relax \DeclareMathOperator*\lim{\vphantom{p}lim}
\makeatletter \@addtoreset{equation}{section} \makeatother
\renewcommand\theequation{\thesection.\arabic{equation}}
\begin{document}

\title{Existence, uniqueness, and numerical approximations
	 for stochastic Burgers equations}

\author{Sara Mazzonetto$^*$ and  Diyora Salimova$^{**}$
	\bigskip
	\\
	\small{$^*$Faculty of Mathematics, University of Duisburg-Essen,}\\
	\small{Germany, e-mail:   sara.mazzonetto@uni-due.de}
	\smallskip
	\\
	\small{$^{**}$Seminar for Applied Mathematics, ETH Zurich,}\\
	\small{Switzerland, e-mail:  diyora.salimova@sam.math.ethz.ch}}


\maketitle

\begin{abstract}
  In this paper we propose an all-in-one statement which includes  existence, uniqueness, regularity,  and numerical approximations of mild solutions 
  for a class of stochastic partial differential equations (SPDEs) with non-globally monotone nonlinearities.
	The proof of this result exploits the properties of an existent fully explicit space-time discrete approximation scheme and, in particular, the fact that it satisfies suitable a priori estimates. 
	As a byproduct we obtain almost sure and strong convergence of the approximation scheme to the mild solutions of the considered SPDEs.
	We conclude   by applying the main result of the paper to the stochastic Burgers equations with space-time white noise.
\end{abstract}


\section{Introduction}

In this work we exploit the properties of the approximation method introduced in~\citet{hutzenthaler2016strong} for a class of stochastic partial differential equations (SPDEs) with non-globally monotone nonlinearities driven by space-time white noise and
 obtain existence, uniqueness, and (spatial) regularity 
of the solution processes for such SPDEs.
At the same time, we achieve almost sure convergence of the approximation scheme (see Theorem~\ref{thrm:EU} below).
The proof  of the main result of the paper (see Theorem~\ref{thrm:EU} below) employs a priori estimates obtained in \citet[Corollary~2.6]{jentzen2019strong}
as well as an existence and uniqueness result for solutions of a class of Banach space valued evolution equations in \citet[Corollary~8.4]{jentzen2018existence}.
In addition, under the abstract setting of the main result, we  apply a strong convergence result in \citet[Theorem~3.5]{jentzen2019strong}, and thereby provide an all-in-one statement for existence, uniqueness, and (spatial) regularity of the solution processes and strong convergence of the approximation scheme in case of the considered SPDEs (see Corollary~\ref{cor:strong} below).

The approximation method we consider is the \emph{space-time full-discrete nonlinearity-truncated accelerated exponential Euler-type scheme} that converges strongly to the solutions of certain infinite-dimensional stochastic evolution equations with superlinearly growing non-linearities such as
stochastic Kuramoto-Sivashinsky equations with space-time white noise (see \citet[Corollary~5.10]{hutzenthaler2016strong}), 
stochastic Burgers equations and Allen-Cahn equations both driven by space-time white noise (see \citet[Corollary~5.6 \& Corollary~5.11]{jentzen2019strong}), and
two-dimensional stochastic Navier-Stokes equations driven by a certain  trace class noise (see \citet[Theorem~5.1]{mazzonetto2018strong}).
We would also like to mention that \citet[Theorem~1.1]{becker2017strong} establishes spatial and temporal rates of strong convergence for this scheme in the case of stochastic Allen-Cahn equations.

To explain our result better let us consider $H$ to be the real Hilbert space given by $H=L^{2}((0,1); \R)$,   $A \colon D(A) \subseteq H \to H$ to be the Laplace operator with Dirichlet boundary conditions on $H$, and $ ( H_r, \left< \cdot , \cdot \right>_{ H_r }, \left\| \cdot \right\|_{ H_r } ) $, $ r \in \R $,
to be a family of interpolation spaces associated to $ - A $.
The main result of this paper, Theorem~\ref{thrm:EU} below, is applicable to a subclass of stochastic evolution equations considered in Theorem~3.5 in \citet{jentzen2019strong}.
This subclass has to satisfy an additional regularity condition on the nonlinearity (see Setting~\ref{section:EUN:main}, in particular, inequality \eqref{eq:add:property}  below), 
which is crucial in the proof of pathwise a priori estimates for the approximation process (see Lemma~\ref{lem:apriori1} below).
These a priori bounds guarantee that the solution process takes values in an appropriate proper subspace of $H$, that is, $H_{\varrho}$ for some $\varrho \in (0, \infty)$, which determines the spatial regularity. We note that
Theorem~3.5 in  \cite{jentzen2019strong} requires that there exists a solution $X\colon \Omega \times [0,T] \to H_{\varrho}$ for some appropriate $\varrho \in [0,\infty)$.
Our main result establishes existence and uniqueness of the mild solution with a compatible spatial regularity. 
Techniques similar to the ones appearing in our proof can, e.g., be found in \citet[Theorem~3.1 and Subsection~4.3]{blomker2013galerkin}
which, in particular, provides existence and uniqueness of the mild solution for stochastic Burgers equations with space-time white noise with values in the Banach space $C((0,1),\R)$ 
exploiting spectral Galerkin approximations.

As an example, we choose to apply  the main result of this paper to the stochastic Burgers equations driven by space-time white noise. 
In this way for every $\varrho\in (\nicefrac18, \nicefrac14)$ we obtain the existence and uniqueness of the mild solution taking values in the space $H_{\varrho}$ (which is a  subspace of $C((0,1); \R)$).
We would  like to note that  there are several existence and uniqueness results in the literature for mild solutions of stochastic Burgers equations driven by colored noise (see, e.g., \citet{da1995stochastic}) and by space-time white noise (see, e.g., \citet{da1994stochastic} in the case of cylindrical Wiener process and \citet{bertini1994stochastic} in the case of Brownian sheet).
Other relevant references can, e.g., be found in \citet[Section~13.9]{da2014stochastic} and \citet[Chapter~14]{da1996ergodicity} and the references mentioned therein.
Our results extend the strong convergence result for stochastic Burgers equations in \cite[Corollary~5.6]{jentzen2019strong} because they yield existence, uniqueness, and spatial regularity of the mild solution and at the same time not only strong but also almost sure convergence for the numerical scheme. 

To conclude, let us mention the fact that our main all-in-one results (in particular, Corollary~\ref{cor:strong} below) can also be applied to the Kuramoto-Sivashinsky equations considered in \citet{hutzenthaler2016strong}, recovering the strong convergence result for the numerical scheme obtained there and also recovering the existence and uniqueness of the mild solution obtained in, e.g.,  \citet{duan2001stochastic}.

\subsection{Outline of the paper}
First, in Section~\ref{sec:pathwise}, we analyze pathwise regularity properties of the considered approximation scheme for a certain family of evolution equations. In particular, we obtain in Section~\ref{sec:pathwise} pathwise  a priori estimates and  convergence to a local mild solution (see Lemma~\ref{lem:apriori1} and Lemma~\ref{lem:convergence1}, respectively).
The non-explosion of the approximation scheme then leads to non-explosion of the unique maximal solution and therefore to pathwise existence and uniqueness of the global solution (see Proposition~\ref{prop:euT:Xomega}). 
The main result of the paper is given in Section~\ref{sec:main} in Theorem~\ref{thrm:EU}. It allows us to obtain an all-in-one statement for existence, uniqueness, and (spatial) regularity of the solution processes and strong convergence of the approximation scheme  in Corollary~\ref{cor:strong} below.
Finally, in Section~\ref{sec:burgers} we apply the latter to the stochastic Burgers equations with space-time white noise (see Corollary~\ref{cor:strong:burgers}).

\subsection{Notation}
Throughout this article the following notation is used. 
Let $ \mathbb{N} = \{1, 2, 3, \ldots \}$ be the set of all natural numbers. 
We denote by $ \lf \cdot \rf_h \colon \R \to \R$, $ h \in (0, \infty)$, the functions which satisfy for all $t \in \R$, $h \in (0, \infty)$ that 
\begin{align}
\lf t \rf_h = \max( (-\infty, t] \cap \{0, h, -h, 2h, -2h, \ldots\} ).
\end{align}
For a set $A$ we denote by $\#_{A}\in \N \cup \{0\} \cup \{\infty\}$ the number of elements of $A$ and we denote by $\Id_A \colon A \to A$ the function which satisfies for all $ a \in A$ that $\Id_A(a)=a$ 
(identity function on $ A $). 
For a topological space $ (X, \tau) $ 
we denote by $ \mathcal{B}(X) $ the Borel sigma-algebra of 
$ (X, \tau) $.

\section{Pathwise global solutions} \label{sec:pathwise}

This section is devoted to prove a pathwise existence of a unique global solution and convergence of the approximation scheme. We establish this result in Proposition~\ref{prop:euT:Xomega}. 
The main ingredients of the proof of Proposition~\ref{prop:euT:Xomega} are Lemma~\ref{lem:apriori1} and Lemma~\ref{lem:convergence1}. The latter establishes convergence and non-explosion of a (local) solution in a certain general setting and the former shows suitable a priori bounds for the (deterministic) approximation scheme. 

\begin{sett} \label{sett:apriori}
Let $ ( H, \left< \cdot , \cdot \right>_H, \left\| \cdot \right\|_H ) $ be a separable $\mathbb{R}$-Hilbert space, 
let $\mathbb{H}\subseteq{H}$ be a nonempty
orthonormal basis of $H$, 
let $\eta, \kappa \in [0,\infty)$,
let $\lambda \colon \mathbb{H} \to \mathbb{R}$ satisfy that $\inf_{b\in \mathbb{H}} \lambda_b>-\min\{\eta,\kappa\}$,
let
$ A \colon D(A) \subseteq H \to H $
be the linear operator which satisfies
$ D(A) = \{ v \in H \colon \sum_{b\in \mathbb{H}} | \lambda_b \langle b , v \rangle_H |^2 < \infty \} $
and
$ \forall \, v \in D(A) \colon A v = \sum_{b\in \mathbb{H}} - \lambda_b \langle b , v \rangle_H b$,
let $ ( H_r, \left< \cdot , \cdot \right>_{ H_r }, \left\| \cdot \right\|_{ H_r } ) $, 
$ r \in \R $, be a family of interpolation spaces associated to $ \kappa- A  $ (see, e.g., \citet[Section~3.7]{sell2002dynamics}), 
and let 
$ T,  \vartheta, c  \in (0,\infty)$,
$ \theta , \epsilon \in [0, \infty)$, 
$ \alpha, \varphi \in [0,1)$, 
$\gamma \in (0,1)$,
$\rho \in [-\alpha,1-\max\{\alpha,\gamma\})$, 
$ \varrho \in (\rho, 1-\gamma)$,
$\chi \in (0, \min\{(\varrho-\rho)/(1+\vartheta/2), (1-\alpha-\rho)/(1+\vartheta)\}]$.
\end{sett}

\subsection{A priori bounds}

\begin{lemma}[A priori bounds]\label{lem:apriori1}
Assume Setting~\ref{sett:apriori},
assume in addition that  $\sup_{b\in \mathbb{H}} |\lambda_b|$ $< \infty$,
let $\beta \in (0, \infty)$,
$ h \in (0, \min\{1,T\}]$,
and let $ Y, O, \mathbb{O} \colon [0, T] \to  H$, $F \in C(H, H)$, 
$\phi, \Phi \colon H \to [0,\infty)$  
satisfy for all $v, w \in H $, $t \in [0,T]$  that 
$
\eta O \in C([0,T],H)
$,
$ 
\mathbb{O}_t = O_t - \int_0^t e^{(t-s)(A-\eta)} \, \eta  O_s \, ds
$,
$
\|F(v)\|_{H_{-\gamma}} \leq c (2  \epsilon + \|v\|^2_H)
$,
$
\| F(v)\|_{H_{-\alpha}}^2 \leq \theta \max\{ 1, \| v\|_{ H_{\varrho} }^{2 + \vartheta} \}
$,
\begin{align}
\left< v, F( v + w ) \right>_{H} \leq \tfrac{1}{2} \phi(w) \| v \|^2_{H}+ \varphi \|(\eta-A)^{\nicefrac{1}{2}} v \|^2_{ H}+ \tfrac{1}{2}\Phi( w ),
\end{align} 
\begin{align} 
\begin{split}
& \| (\eta-A)^{-\nicefrac{1}{2}}(F(v) - F(w)) \|_H^2 \\
& \leq \theta \max\{ 1, \|v\|_{ H_{\varrho} }^{\vartheta} \} \|v-w\|_{ H_{\rho} }^2 + \theta \, \|v-w\|^{2 + \vartheta}_{ H_{\rho}},
\end{split}
\end{align} 
and
\begin{equation} \label{eq:apriori1:Y}
Y_t =  \int_0^t e^{ ( t - s ) A } \, \one_{[0, h^{ - \chi }]} \big( \big\|  Y_{ \fl{s} } \big\|_{H_{\varrho}} + \big\|  O_{ \fl{s} } \big\|_{H_{\varrho}}  \big)  F \big(  Y_{ \fl{s} } \big) \, ds + O_t.
\end{equation}
Then it holds that $\eta \mathbb{O} \in C([0,T],H)$ and for all $ t \in [0, T]$  that
\begin{align}
\begin{split}
 & 
\|Y_t-{O}_t\|_{H_\varrho} 
\leq  \frac{ 2c \, e^{t  \kappa} \, t^{(1-\varrho-\gamma)} }{(1- \varrho- \gamma)}    \Bigg( \epsilon + \sup_{s\in [0,T]} \| \mathbb{O}_{ \fl{s} }\|^2_H \, ds
	\\
	& 
 \quad + \left(1
+\tfrac{ \theta  e^{ \kappa \, (2+\vartheta)} [ 1+ (\kappa + \sqrt{\eta} +\sqrt{\eta}|\kappa-\eta|e^{ \eta} ) \|(\kappa-A)^{ \rho - \varrho } \|_{L(H)} + \sqrt{\theta} +  \sqrt{\eta} ]^{(2+\vartheta)}}
{(1- \varphi)( 1-\alpha - \rho)^{(2+\vartheta)}} \right)
	\\
	& 
 \qquad \cdot \int_0^T e^{ \int_s^T \phi( \mathbb{O}_{\fl{u} } )  +2\eta (1+\beta) \, du} \, \Big[  \Phi\big(\mathbb{O}_{ \fl{s} } \big)  + \tfrac{\eta}{2 \beta} \|\mathbb{O}_s\|_{H}^2  
	\\
	& 
\qquad + \left|\max \!\big\{  1 , \eta,T  \big\}\right|^{(4+3\vartheta)}  \max \!\Big\{ 1,  \smallint\nolimits_{0}^T   \|  \sqrt{ \eta} O_u \|^{(4+2\vartheta)}_{H_{\varrho}} \, du \Big\} \Big] \, ds \Bigg).
\end{split}
\end{align}
\end{lemma}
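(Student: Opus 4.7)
The plan is to combine a factorization-type bound for $\|Y_t - O_t\|_{H_\varrho}$ with an energy estimate on the auxiliary process $V_t := Y_t - \mathbb{O}_t$. Noting the semigroup identity $e^{tA} = e^{\eta t}\, e^{t(A-\eta)}$ and subtracting the integral representations of $Y_t$ and $\mathbb{O}_t$ shows that $V$ is the mild solution of the equation $\partial_t V_t = (A-\eta)V_t + \one_{[0,h^{-\chi}]}(\cdot)\, F(Y_{\fl{t}}) + \eta O_t$, whose linear part is coercive since $\inf_{b}\lambda_b>-\eta$. Continuity of $\eta\mathbb{O}$ on $[0,T]$ follows from that of $\eta O$ by a standard strongly continuous semigroup argument on the defining integral.

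For the energy estimate on $\|V_t\|_H^2$, the key algebraic step is the decomposition
\begin{align*}
\langle V_t, F(Y_{\fl{t}})\rangle_H
= \langle V_t, F(V_t + \mathbb{O}_{\fl{t}})\rangle_H
+ \langle V_t, F(V_{\fl{t}} + \mathbb{O}_{\fl{t}}) - F(V_t + \mathbb{O}_{\fl{t}})\rangle_H.
\end{align*}
The first term is bounded by the monotonicity hypothesis with $v=V_t$, $w=\mathbb{O}_{\fl{t}}$, producing contributions $\phi(\mathbb{O}_{\fl{t}})\|V_t\|_H^2$, $\varphi\|(\eta-A)^{\nicefrac12}V_t\|_H^2$, and $\Phi(\mathbb{O}_{\fl{t}})$; because $\varphi<1$, the $\varphi$-contribution is absorbed into the coercive $\|(\eta-A)^{\nicefrac12}V_t\|_H^2$ coming from $\langle V_t,(A-\eta)V_t\rangle_H$. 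The remainder term is estimated via Cauchy--Schwarz, Young's inequality, and the Lipschitz-type bound on $(\eta-A)^{-\nicefrac12}(F(\cdot)-F(\cdot))$; the required estimate on $\|V_t - V_{\fl{t}}\|_{H_\rho}$ follows from the mild form of $V$ together with $\|F(\cdot)\|_{H_{-\alpha}}^2\leq\theta\max\{1,\|\cdot\|_{H_\varrho}^{2+\vartheta}\}$ and the standard analytic semigroup smoothing bound, where the constraints on $\chi,\rho,\alpha$ in Setting~\ref{sett:apriori} are precisely those that render this estimate uniform in $h$ (on the support of the indicator, $\|Y_{\fl{s}}\|_{H_\varrho}\le h^{-\chi}$, and the powers $(2+\vartheta)$, $(4+2\vartheta)$, $(4+3\vartheta)$ appearing in the final bound are generated by iterating and squaring this step). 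The contribution $\langle V_t,\eta O_t\rangle_H$ is split via Young with parameter $\beta$ after writing $O_t = \mathbb{O}_t + (O_t - \mathbb{O}_t)$, absorbing the second piece into the coercivity. A Gronwall argument then delivers
\begin{align*}
\|V_t\|_H^2 \leq \int_0^t \Bigl[ \Phi(\mathbb{O}_{\fl{s}}) + \tfrac{\eta}{2\beta}\|\mathbb{O}_s\|_H^2 + (\text{higher-order terms in }\|\sqrt{\eta}O_u\|_{H_\varrho}) \Bigr]\, e^{\int_s^t \phi(\mathbb{O}_{\fl{u}}) + 2\eta(1+\beta)\, du}\, ds,
\end{align*}
matching the bracket in the claim.

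To assemble the final bound, apply the smoothing estimate $\|e^{\tau A}v\|_{H_\varrho} \leq C \tau^{-\varrho-\gamma} e^{\tau\kappa}\|v\|_{H_{-\gamma}}$ to the defining equation \eqref{eq:apriori1:Y}, together with $\|F(v)\|_{H_{-\gamma}} \leq c(2\epsilon + \|v\|_H^2)$ and $\|Y_s\|_H^2 \leq 2\|V_s\|_H^2 + 2\|\mathbb{O}_s\|_H^2$. The elementary identity $\int_0^t (t-s)^{-\varrho-\gamma}\,ds = t^{1-\varrho-\gamma}/(1-\varrho-\gamma)$ produces the stated prefactor, and substituting the Gronwall bound on $\|V_s\|_H^2$ closes the estimate.

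The principal obstacle is the energy-estimate step: several couplings must be tracked simultaneously (coercivity of $A-\eta$, monotonicity of $F$, the Lipschitz-type bound, and the truncation), and the discrete-time remainder $V_t - V_{\fl{t}}$ has to be controlled via the mild form in a way that scales correctly with $h$. The bookkeeping is delicate, but the abstract setting is rigged so that the various exponent constraints ($\varrho<1-\gamma$, $\rho<1-\max\{\alpha,\gamma\}$, $\chi\leq\min\{(\varrho-\rho)/(1+\vartheta/2),(1-\alpha-\rho)/(1+\vartheta)\}$) make the relevant semigroup integrals converge and absorb all $h^{-\chi}$-type contributions.
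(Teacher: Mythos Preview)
Your outline is essentially correct and matches the structure of the paper's argument, but the paper takes a significant shortcut that you do not: the entire energy estimate on $\sup_{t\in[0,T]}\|Y_{\fl{t}}-\mathbb{O}_{\fl{t}}\|_H^2$ (your $\|V\|_H^2$ bound) is obtained by a single citation of Corollary~2.6 in \cite{jentzen2019strong}, which delivers the full bracketed expression with the explicit constants $[1+(\kappa+\sqrt{\eta}+\sqrt{\eta}|\kappa-\eta|e^{\eta})\|(\kappa-A)^{\rho-\varrho}\|_{L(H)}+\sqrt{\theta}+\sqrt{\eta}]^{(2+\vartheta)}$ already in place. The paper then only needs to do the $H_{-\gamma}\to H_\varrho$ smoothing step (your final paragraph) and a H\"older/max manipulation to consolidate the $\|\sqrt{\eta}O_u\|_{H_\varrho}$ powers into the form $|\max\{1,\eta,T\}|^{(4+3\vartheta)}\max\{1,\int_0^T\|\sqrt{\eta}O_u\|_{H_\varrho}^{(4+2\vartheta)}du\}$. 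So your proposal is a self-contained rederivation of that cited result; what it buys is independence from the external reference, at the cost of the delicate bookkeeping you flag as the principal obstacle.

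One slip worth fixing: the evolution equation you write for $V_t=Y_t-\mathbb{O}_t$ is not quite right. Differentiating the two mild formulations gives either $\partial_t V_t = A V_t + \one(\cdot)F(Y_{\fl{t}}) + \eta\,\mathbb{O}_t$ (linear part $A$, forcing by $\eta\mathbb{O}_t$) or equivalently $\partial_t V_t = (A-\eta)V_t + \one(\cdot)F(Y_{\fl{t}}) + \eta\,Y_t$; your version with forcing $\eta O_t$ and linear part $A-\eta$ does not hold. The former is the one that produces the $\tfrac{\eta}{2\beta}\|\mathbb{O}_s\|_H^2$ term directly via Young's inequality on $\langle V_t,\eta\mathbb{O}_t\rangle_H$, with no need to split $O_t=\mathbb{O}_t+(O_t-\mathbb{O}_t)$ as you suggest.
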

\begin{proof}[Proof of Lemma~\ref{lem:apriori1}] 
First, observe that for all $s\in(0,T)$ it holds that
\begin{align}
\begin{split}
 \| (\kappa-A)^{(\varrho + \gamma)} e^{ s  A } \|_{L(H)} 
 & = e^{s  \kappa} s^{-(\varrho+\gamma)} \| (s(\kappa-A))^{(\varrho + \gamma)} e^{ s (A-\kappa) } \|_{L(H)} \\
 &
 \leq e^{s \kappa} s^{-(\varrho+\gamma)}
\end{split}
\end{align}
(cf., e.g., Lemma~11.36 in \citet{renardy2006introduction}).
This,  \eqref{eq:apriori1:Y}, the triangle inequality, and the assumption that $\forall \, v \in H \colon \|F(v)\|_{H_{-\gamma}}\leq c (2 \epsilon +\|v\|_H^2)$ 
imply that for all $t\in [0,T]$ it holds that
\begin{align}
\begin{split}
 \|Y_t-{O}_t\|_{H_{\varrho}} 
	& 
\leq \int_0^t \| (\kappa-A)^{(\varrho + \gamma)} e^{ ( t - s ) A } \|_{L(H)} \|  F \big(  Y_{ \fl{s} } \big) \|_{H_{-\gamma}}\, ds 
	\\
	& 
\leq c \int_0^t  e^{( t - s ) \kappa} \left(t-s\right)^{-(\varrho+\gamma)} \left( 2 \epsilon + \|  Y_{ \fl{s} } \|^2_{H} \right) ds
	\\
	& 
\leq c \Big[ 2 \epsilon + \sup\nolimits_{s\in [0,t]}  \| Y_{ \fl{s} }\|^2_H \Big] \int_0^t e^{(t-s) \kappa} (t-s)^{-(\varrho+\gamma)} \, ds.
\end{split} 
\end{align}
This together with the fact that $\forall \, a, b \in \R \colon |a+b|^2 \leq 2 |a|^2 + 2|b|^2$ and the fact that $\varrho + \gamma <1$ shows that for all $t\in [0,T]$ it holds that
\begin{align*} \label{eq:apriori1:Hrhonorm}
	& \|Y_t-{O}_t\|_{H_{\varrho}} \\
	& \leq  c \!\left[ 2\epsilon 
	+ 2 \sup_{s\in [0,T]} \| Y_{ \fl{s} }-\mathbb{O}_{ \fl{s} } \|^2_H 
	+ 2 \sup_{s\in [0,T]} \| \mathbb{O}_{ \fl{s} }\|^2_H \right]  e^{t \kappa} \int_0^t  (t-s)^{-(\varrho+\gamma)} \, ds
	\\
	& 
=
2 c \! \left[ \epsilon 
+ \sup_{s\in [0,T]} \| Y_{ \fl{s} }-\mathbb{O}_{ \fl{s} } \|^2_H 
+ \sup_{s\in [0,T]} \| \mathbb{O}_{ \fl{s} }\|^2_H \right]  
\frac{e^{t \kappa} \, t^{(1-\varrho -\gamma)}}{(1-\varrho -\gamma)}. \numberthis
\end{align*}
Next note that  the assumption that $\sup_{b\in \mathbb{H}} |\lambda_b| < \infty$ 
assures that
$A\in L(H)$.
Corollary~2.6 in \citet{jentzen2019strong} therefore ensures that $\eta \mathbb{O} \in C([0,T],H)$ and   that
\begin{align*}
& \sup_{t\in [0,T]} \| Y_{\fl t} - \mathbb{O}_{\fl t} \|_{H}^2  
\leq    \int_0^T e^{ \int_s^T \phi( \mathbb{O}_{\fl{u} } )  +2\eta (1+\beta) \, du} \, \Big[  \Phi\big(\mathbb{O}_{ \fl{s} } \big)  + \tfrac{\eta}{2 \beta} \|\mathbb{O}_s\|_{H}^2    
	\\
	&
\quad + \tfrac{ \theta  e^{ \kappa(2+\vartheta)} [ 1+ (\kappa + \sqrt{\eta} +\sqrt{\eta}|\kappa-\eta|e^{ \eta} ) \|(\kappa-A)^{ \rho - \varrho } \|_{L(H)} + \sqrt{\theta} +  \sqrt{\eta} ]^{(2+\vartheta)} \left|\max \{1 , \smallint\nolimits_{0}^T   \|  \sqrt{\eta} O_u \|_{H_{\varrho}} \, du \}\right|^{(2+\vartheta)}}{(1- \varphi)( 1-\alpha - \rho)^{(2+\vartheta)}}   
	\\
	& 
\qquad \cdot  \max \!\big\{  h^{2(\varrho - \rho -\chi)},h^{ 2(1-\alpha - \rho -( 1 + \vartheta / 2 ) \chi)} , h \smallint\nolimits_{0}^T   \|  \sqrt{ \eta} O_u \|_{H_{\varrho}}^2 \, du \big\} 
	\\
& \qquad \cdot \left|\max \!\big\{ h^{-\chi},  \smallint\nolimits_{0}^T   \|  \sqrt{ \eta} O_u \|_{H_{\varrho}} \, du \big\}\right|^{\vartheta}\Big] \, ds \numberthis
	\\
	&  
\leq \left(1+\tfrac{ \theta  e^{ \kappa(2+\vartheta)}  [ 1+ (\kappa + \sqrt{\eta} +\sqrt{\eta}|\kappa-\eta|e^{ \eta} ) \|(\kappa-A)^{ \rho - \varrho } \|_{L(H)} + \sqrt{\theta} +  \sqrt{\eta} ]^{(2+\vartheta)}}{(1- \varphi)( 1-\alpha - \rho)^{(2+\vartheta)}} \right)
	\\
  & 
\quad	\cdot \int_0^T e^{ \int_s^T \phi( \mathbb{O}_{\fl{u} } )  +2\eta (1+\beta) \, du} \, \Big[  \Phi\big(\mathbb{O}_{ \fl{s} } \big)  + \tfrac{\eta}{2 \beta} \|\mathbb{O}_s\|_{H}^2  
	\\
	& \qquad  +  
\max \!\big\{  h^{2(\varrho - \rho -\chi)-\chi \vartheta},h^{ 2(1-\alpha - \rho -( 1 + \vartheta ) \chi)} , h^{1-\chi \vartheta} \smallint\nolimits_{0}^T   \|  \sqrt{ \eta} O_u \|_{H_{\varrho}}^2 \, du \big\} 
	\\
	& \qquad  \cdot 
\left|\max \{1 , \smallint\nolimits_{0}^T \|  \sqrt{\eta} O_u \|_{H_{\varrho}} \, du \}\right|^{(2+\vartheta)}   
\left|\max \!\big\{ 1, h^{\chi} \, \smallint\nolimits_{0}^T   \|  \sqrt{ \eta} O_u \|_{H_{\varrho}} \, du \big\}\right|^{\vartheta}\Big] \, ds.
\end{align*}
Combining this with the fact that $h<1$, $1-\chi \vartheta \geq 0$, $\tfrac{(1-\alpha-\rho)}{(1+\vartheta)}\geq \chi$, $\tfrac{(\varrho-\rho)}{(1+\nicefrac{\vartheta}{2})} \geq \chi$ demonstrates that for all $t\in [0,T]$ it holds that
\begin{align*} \label{eq:apriori1:Hnorm}
&
\sup_{t\in [0,T]} \| Y_{\fl t} - \mathbb{O}_{\fl t} \|_{H}^2 
	\\
	& 
\leq    \left(1+\tfrac{ \theta  e^{ \kappa(2+\vartheta)} [ 1+ (\kappa + \sqrt{\eta} +\sqrt{\eta}|\kappa-\eta|e^{ \eta} ) \|(\kappa-A)^{ \rho - \varrho } \|_{L(H)} + \sqrt{\theta} +  \sqrt{\eta} ]^{(2+\vartheta)}}{(1- \varphi)( 1-\alpha - \rho)^{(2+\vartheta)}} \right) \numberthis
	\\
	&
\quad \cdot \int_0^T e^{ \int_s^T \phi( \mathbb{O}_{\fl{u} } )  +2\eta (1+\beta) \, du} \, \Big[  \Phi\big(\mathbb{O}_{ \fl{s} } \big)  + \tfrac{\eta}{2 \beta} \|\mathbb{O}_s\|_{H}^2    
	\\
	& 
\qquad + \max \!\big\{  1 , \smallint\nolimits_{0}^T   \|  \sqrt{ \eta} O_u \|_{H_{\varrho}}^2 \, du \big\} \left|\max \!\big\{ 1,  \smallint\nolimits_{0}^T   \|  \sqrt{ \eta} O_u \|_{H_{\varrho}} \, du \big\}\right|^{(2+2\vartheta)}\Big] \, ds.
\end{align*} 
Moreover, note that H{\"o}lder's inequality implies that
\begin{align}
\begin{split}
& \max \!\big\{  1 , \smallint\nolimits_{0}^T   \|  \sqrt{ \eta} O_u \|_{H_{\varrho}}^2 \, d u \big\} \left|\max \!\big\{ 1,  \smallint\nolimits_{0}^T   \|  \sqrt{ \eta} O_u \|_{H_{\varrho}} \, du \big\}\right|^{2+2\vartheta} \\
	& \leq \max \!\big\{  1 , \eta \smallint\nolimits_{0}^T   \|  O_u \|_{H_{\varrho}}^2 \, d u \big\} \left|\max \!\big\{ 1,  \eta \, T \, \smallint\nolimits_{0}^T   \| O_u \|^2_{H_{\varrho}} \, du \big\}\right|^{1+\vartheta} \\
	& \leq 
\left|\max \!\big\{  1 , \eta, T  \big\}\right|^{3+2\vartheta} \left|\max \!\big\{ 1,  \smallint\nolimits_{0}^T   \| O_u \|^2_{H_{\varrho}} \, du \big\}\right|^{2+\vartheta}\\
	& \leq 
\left|\max \!\big\{  1 , \eta, T  \big\}\right|^{4+3\vartheta} \max \!\big\{ 1,  \smallint\nolimits_{0}^T   \| O_u \|^{2(2+\vartheta)}_{H_{\varrho}} \, du \big\}.
\end{split}
\end{align}
This together with \eqref{eq:apriori1:Hnorm} yields that
\begin{align} \label{eq:apriori1:Hnorm2}
\begin{split}
	& \sup_{t\in [0,T]} \| Y_{\fl t} - \mathbb{O}_{\fl t} \|_{H}^2   
	\\
	& 
\leq    \left(1+\tfrac{ \theta  e^{ \kappa(2+\vartheta)} [ 1+ (\kappa + \sqrt{\eta} +\sqrt{\eta}|\kappa-\eta|e^{ \eta} ) \|(\kappa-A)^{ \rho - \varrho } \|_{L(H)} + \sqrt{\theta} +  \sqrt{\eta} ]^{2+\vartheta}}{(1- \varphi)( 1-\alpha - \rho)^{2+\vartheta}} \right)
	\\
	&
\quad \cdot \int_0^T e^{ \int_s^T \phi( \mathbb{O}_{\fl{u} } )  +2\eta (1+\beta) \, du} \, \Big[  \Phi\big(\mathbb{O}_{ \fl{s} } \big)  + \tfrac{\eta}{2 \beta} \|\mathbb{O}_s\|_{H}^2    
	\\
	& \qquad + 
\left|\max \!\big\{  1 , \eta, T  \big\}\right|^{4+3\vartheta} \max \!\big\{ 1,  \smallint\nolimits_{0}^T   \| O_u \|^{2(2+\vartheta)}_{H_{\varrho}} \, du \big\} \Big] ds.
\end{split}
\end{align} 
Combining this and \eqref{eq:apriori1:Hrhonorm} 
completes the proof of Lemma~\ref{lem:apriori1}.
\end{proof}


\begin{lemma}[Pathwise convergence and non-explosion] \label{lem:convergence1}
Let $(V, \left\|\cdot\right\|_V)$ 
be a separable $\mathbb{R}$-Banach space,
let $(W,\left\|\cdot\right\|_W)$ 
be an $\mathbb{R}$-Banach space, 
let $T,  \chi \in (0,\infty)$, 
let $J\subseteq [0,T]$ be a convex set satisfying $0\in J$,
let $F\in C(V,W)$ and 
$\Psi \colon [0,\infty] \to [0,\infty]$ satisfy for all $r\in [0,\infty]$ that  $\Psi([0,\infty))\subseteq [0,\infty)$ and
\begin{align}
\begin{split}
 \Psi(r) 
= \sup \left( \left\{\tfrac{\|F(v)-F(w)\|_W}{\|v-w\|_V} \colon v, w \in V, v \neq w, \|v\|_V + \|w\|_V \leq r\right\} \cup \{0\}  \right),
\end{split}
\end{align}
let $S \colon (0,T) \to L(W,V)$ be a $\mathcal{B}((0,T))\slash \mathcal{B}(L(W,V))$-measurable function, 
let $\alpha\in [0,1)$ and $\left(P_n \right)_{n\in\N} \subseteq L(V)$ 
satisfy that
	$
	\sup_{s \in (0,T)} s^{\alpha} \|S_s\|_{L(W,V)} <\infty
	$,
	$
	\limsup_{ m \to \infty }$ $\| P_m \|_{ L(V) } < \infty
	$, 
	and 
	\begin{align}
	\limsup_{m\to\infty} \int_0^T \|( \Id_V - P_m ) S_s \|_{L(W,V)} \, d s =0,
	\end{align}
let $O\in C([0,T],V)$ and $\mathcal{O}^n \colon [0,T] \to V$, $n\in\N$, satisfy that 
\begin{align}
\limsup_{m\to\infty} \sup_{s \in [0, T]} \| O_s - \mathcal{O}_s^m\|_V=0,
\end{align}
let $\left(h_n\right)_{n\in\N} \subseteq (0,\infty)$ 
satisfy that $\limsup_{m\to\infty} h_m =0$,
and let $X \in C(J,V)$ and
$\Y^n \colon [0,T] \to V$, $n\in\N$, 
satisfy for all $t\in J$, $n\in \N$ that
	$
	X_t=\int_0^t S_{t-s}\, F(X_s) \, d s +O_t
	$, 
	\begin{align}
	\Y_{t}^n  = \int_0^{t} P_n \, S_{ t - s } \, \one_{[0,|h_n|^{-\chi}]} \big(  \big\| \Y_{ \lf s \rf_{h_n} }^n \big\|_{V} +  \big\|  \mathcal{O}_{ \lf s \rf_{h_n} }^n \big\|_{V}  \big) \, F \big(  \Y_{ \lf s \rf_{ h_n } }^n \big) \, ds  +\mathcal{O}_{t}^n,
	\end{align}
	and 
	$ 
	\liminf_{m\to \infty}\sup\nolimits_{s\in J} \|\Y_s^m\|_{V}<\infty.
	$ 
Then it holds 
\begin{enumerate}[(i)]
\item\label{item:lem:convergence1:1} for all $t\in J$ that $ \limsup_{n \to \infty} \sup_{s \in [0, t]} \| X_s- \Y_s^n \|_V=0 $ and
\item\label{item:lem:convergence1:2} that $\sup_{s\in J} \|X_s\|_V <\infty$.
\end{enumerate}
\end{lemma}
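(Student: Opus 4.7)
The strategy is three-fold: (1) extract from $(\Y^n)_{n \in \N}$ a subsequence that is uniformly bounded on $J$ (possible thanks to the $\liminf$-hypothesis); (2) prove convergence of this subsequence to $X$ via a weakly singular Gronwall inequality, as a byproduct obtaining a global bound on $X$; and (3) bootstrap from the subsequence to the full sequence using the a priori bound on $X$ just produced. Throughout, since $O \in C([0,T],V)$ and $\mathcal{O}^n \to O$ uniformly, the quantity $R_O := \sup_{n \in \N}\sup_{s \in [0,T]}\|\mathcal{O}^n_s\|_V$ is finite, and the hypothesis yields a subsequence $(m_k)_{k \in \N}$ and $R \in (0,\infty)$ with $\sup_{s \in J}\|\Y^{m_k}_s\|_V \le R$ for all $k$. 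Because $|h_{m_k}|^{-\chi} \to \infty$, the truncating indicator $\one_{[0,|h_{m_k}|^{-\chi}]}(\|\Y^{m_k}_{\lf s\rf_{h_{m_k}}}\|_V + \|\mathcal{O}^{m_k}_{\lf s\rf_{h_{m_k}}}\|_V)$ equals $1$ on all of $J$ for large $k$.

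For step~(2) fix $t \in J$; convexity of $J$ with $0 \in J$ gives $[0,t] \subseteq J$ compact, so $M_t := \sup_{s \in [0,t]}\|X_s\|_V < \infty$ and $X$ is uniformly continuous on $[0,t]$ with some modulus $\omega_X$. Subtracting the two integral equations and splitting $F(X_u) - F(\Y^{m_k}_{\lf u\rf_{h_{m_k}}})$ via $F(X_{\lf u\rf_{h_{m_k}}})$ yields
\begin{align*}
X_s - \Y^{m_k}_s
 &= \int_0^s S_{s-u}\bigl[F(X_u) - F(X_{\lf u\rf_{h_{m_k}}})\bigr]\,du \\
 &\quad + \int_0^s S_{s-u}\bigl[F(X_{\lf u\rf_{h_{m_k}}}) - F(\Y^{m_k}_{\lf u\rf_{h_{m_k}}})\bigr]\,du \\
 &\quad + \int_0^s (\Id_V - P_{m_k})\, S_{s-u}\, F(\Y^{m_k}_{\lf u\rf_{h_{m_k}}})\,du + (O_s - \mathcal{O}^{m_k}_s).
\end{align*}
Using $\|F(v)-F(w)\|_W \le \Psi(\|v\|_V + \|w\|_V)\|v-w\|_V$, the uniform continuity of $X$, the pointwise bound $\|F(\Y^{m_k}_{\lf u\rf_{h_{m_k}}})\|_W \le \|F(0)\|_W + \Psi(R) R$, $C_S := \sup_{s \in (0,T)} s^\alpha\|S_s\|_{L(W,V)} < \infty$, and the hypotheses $\int_0^T\|(\Id_V - P_{m_k})S_u\|_{L(W,V)}\,du \to 0$ and $\sup_{s \in [0,T]}\|O_s - \mathcal{O}^{m_k}_s\|_V \to 0$, one obtains for $\delta_k(r) := \sup_{s \in [0,r]}\|X_s - \Y^{m_k}_s\|_V$ an inequality of the shape $\delta_k(r) \le \varepsilon_k + C\int_0^r (r-u)^{-\alpha}\delta_k(u)\,du$ with $\varepsilon_k \to 0$ and $C$ depending only on $M_t$, $R$, $\Psi$, $C_S$, $\alpha$, and $\sup_n\|P_n\|_{L(V)}$. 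The weakly singular (Henry) Gronwall inequality then gives $\delta_k(t) \to 0$, and taking $\liminf_{k \to \infty}$ in $\|X_s\|_V \le R + \delta_k(t)$ establishes $\sup_{s \in J}\|X_s\|_V \le R$, proving item~\eqref{item:lem:convergence1:2}.

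The main obstacle is step~(3): the hypothesis delivers only \emph{one} bounded subsequence of $\sup_{s \in J}\|\Y^n_s\|_V$, not a full-sequence bound, so subsequential convergence must be upgraded by hand. Set $R^\star := \sup_{s \in J}\|X_s\|_V < \infty$ and for each $t \in J$ define
\[
\sigma_n(t) := \sup\bigl\{r \in [0,t] \colon \sup\nolimits_{s \in [0,r]}\|\Y^n_s\|_V \le 2R^\star + 1\bigr\}.
\]
On $[0,\sigma_n(t)]$ one has $\|\Y^n_s\|_V \le 2R^\star + 1$, and for $n$ large enough that $2R^\star + 1 + R_O \le |h_n|^{-\chi}$ the truncating indicator is identically $1$ on this interval; the Gronwall argument from step~(2), now applied to the full sequence with local Lipschitz constant $\Psi(3R^\star + 1)$, yields $\sup_{s \in [0,\sigma_n(t)]}\|X_s - \Y^n_s\|_V \le \eta_n$ with $\eta_n \to 0$. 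Suppose $\sigma_n(t) < t$ for infinitely many $n$: for every $s \in [\sigma_n(t),\lf \sigma_n(t)\rf_{h_n} + h_n) \cap [0,t]$ one still has $\lf u\rf_{h_n} \le \sigma_n(t)$ for all $u \le s$, so the same truncation and Lipschitz bounds apply on the slightly extended interval, and the Gronwall estimate extends to give $\sup_{r \in [0,s]}\|X_r - \Y^n_r\|_V \le \eta_n'$ with $\eta_n' \to 0$. For large $n$ this forces $\|\Y^n_s\|_V \le R^\star + \eta_n' < 2R^\star + 1$, contradicting the maximality of $\sigma_n(t)$. Hence $\sigma_n(t) = t$ eventually, proving item~\eqref{item:lem:convergence1:1}.
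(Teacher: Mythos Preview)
Your proof is correct in substance but takes a significantly longer, self-contained route than the paper's. The paper's proof of item~\eqref{item:lem:convergence1:1} is a single citation: Proposition~3.3 in Hutzenthaler et al.\ (2016) directly yields full-sequence convergence $\limsup_{n\to\infty}\sup_{s\in[0,t]}\|X_s-\Y^n_s\|_V=0$ for every $t\in J$, and it does so \emph{without using the $\liminf$ hypothesis at all} --- that proposition needs only the continuity of $X$ on $J$ (hence local boundedness) together with the structural assumptions on $S$, $P_n$, $F$, $\mathcal{O}^n$. Item~\eqref{item:lem:convergence1:2} then follows from item~\eqref{item:lem:convergence1:1} in two lines via the triangle inequality
\[
\|X_t\|_V \le \sup_{s\in J}\|\Y^n_s\|_V + \sup_{s\in[0,t]}\|X_s - \Y^n_s\|_V
\]
and taking $\liminf_{n\to\infty}$, which is where the $\liminf$ hypothesis enters.

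Your approach reverses the logical order: you first extract a bounded subsequence from the $\liminf$ hypothesis, run Gronwall along that subsequence to obtain~\eqref{item:lem:convergence1:2}, and then use the global bound on $X$ in a stopping-time/continuation argument to upgrade subsequential to full-sequence convergence. This is essentially a hand-rolled version of what the cited Proposition~3.3 does internally, so you are re-deriving the external input rather than invoking it. What you gain is self-containment; what you lose is brevity. One small caution: the lemma does not assume $\mathcal{O}^n$ (hence $\Y^n$) is continuous, so the definition and extension properties of $\sigma_n(t)$ in your step~(3) need slightly more care than the sketch indicates --- though the argument does go through once one works with strict inequalities on $[0,\sigma_n(t))$ and handles the grid points separately.
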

\begin{proof}[Proof of Lemma~\ref{lem:convergence1}]
First, observe that Proposition~3.3 in \citet{hutzenthaler2016strong} 
shows that for all $t \in J$  it holds that
\begin{equation}
\limsup_{n\to\infty} \sup_{s\in[0,t]} \|X_s-\Y^n_s\|_V=0.
\end{equation}
This establishes Item~\eqref{item:lem:convergence1:1}.
Next note that for all $n\in \N$, $t\in J$ it holds that
\begin{align}
\begin{split}
\|X_t\|_{V} 
	& \leq 
\|\mathcal{X}^n_t\|_{V} + \|X_t-\mathcal{X}^n_t\|_{V} 
	\\
	&\leq 
\sup\nolimits_{s\in J} \|\Y^n_s\|_{V} + \|X_t-\mathcal{X}^n_t\|_{V} 
	\\
	& \leq
\sup\nolimits_{s\in J} \|\Y^n_s\|_{V} + \sup\nolimits_{s\in [0,t]}\|X_s-\mathcal{X}^n_s\|_{V}
.
\end{split}
\end{align}
This together with Item~\eqref{item:lem:convergence1:1} implies that for all $t\in J$ it holds that
\begin{align}\label{eq:ndep}
\begin{split}
\|X_t\|_{V} 
	& \leq 
\liminf_{n\to \infty} \bigg(\sup_{s\in J} \|\Y^n_s\|_{V} + \sup_{s\in [0,t]}\|X_s-\mathcal{X}^n_s\|_{V} \bigg)
	\\
	& \leq
\liminf_{n\to \infty} \sup_{s\in J} \|\Y^n_s\|_{V} + \limsup_{n\to \infty} \sup_{s\in [0,t]}\|X_s-\mathcal{X}^n_s\|_{V} 
	\\
	& = 
\liminf_{n\to \infty} \sup_{s\in J} \|\mathcal{X}^n_s\|_{V} < \infty.
\end{split}
\end{align}
Therefore, we obtain that 
\begin{align}
\sup\nolimits_{t\in J} \|X_t\|_V \leq \liminf\nolimits_{n\to \infty} \sup\nolimits_{t\in J} \|\mathcal{X}^n_t\|_{V} < \infty.
\end{align} 
This establishes Item~\eqref{item:lem:convergence1:2}.
The proof of Lemma~\ref{lem:convergence1} is thus completed.
\end{proof}


\subsection{Pathwise existence, uniqueness, regularity, and approximation}


\begin{prop}[Global solutions]~\label{prop:euT:Xomega}
Assume Setting~\ref{sett:apriori},
	let $F \in C(H_{\varrho}, H_{-\alpha} )$,
	$\left(P_n\right)_{n\in\N} \subseteq L(H)$,
	let 
	$\mathbb{H}_n \subseteq \mathbb{H}$, $n\in\N$, be finite subsets of $\mathbb{H}$ satisfying for all $ n \in \N $, $u\in H$ that
	$ P_n(u) = \sum_{b\in \mathbb{H}_n} \langle b, u \rangle_H b $,
	let $ \phi, \Phi \colon H_1 \to [0,\infty)$ be functions
	such that
	for all $ n \in \N $, $ v, w \in P_n( H ) $ it holds
	that
	$F(v)\in H$,
	$
	\|F(v)\|_{H_{-\gamma}} \leq c \left(2  \epsilon + \|v\|^2_H\right),
	$
	\begin{align}
	\left< v, P_n F( v + w ) \right>_H \leq \phi( w ) \| v \|^2_H + \varphi \|  (\eta -A) ^{\nicefrac{1}{2}}v \|^2_{ H } + \Phi( w ),
	\end{align} 
	and
	\begin{align}  \label{eq:EUN:Flip}
	\left\| F(v) - F(w) \right\|_{ H_{ - \alpha } } \leq \theta \, ( 1 + \| v \|_{ H_{ \rho } }^{ \vartheta } + \|w\|_{H_{\rho}}^{\vartheta}) \, \|v-w\|_{H_{\rho}},
	\end{align}
let $\left(h_n \right)_{n\in\N} \subseteq (0, T]$ satisfy that $ \limsup_{ m \to \infty} h_m =0$,
 assume in addition that
 $ \alpha \in [0,\nicefrac{1}{2}]$,
 $ \varrho \in (\rho, 1-\max\{\alpha,\gamma\})$,
 $ \chi \in  (0, \min\{(\varrho-\rho)/(1+\vartheta), (1-\alpha-\rho)/(1+2\vartheta) \}] $, and
\begin{equation} \label{eq:limsup:hP:Pn}
\limsup_{ m \to \infty} \int_0^T \|( \Id_{H_{\varrho}} - P_m|_{H_{\varrho}} ) e^{s A} \|_{L(H_{-\alpha},H_{\varrho})} \, d s=0,
\end{equation} 
let $ O\in C([0,T],H_{\varrho})$ and
$ \mathcal{O}^n, \mathbb{O}^n \colon [0, T] \to H_{\varrho}$, $ n \in \N$, be functions which satisfy for all 
$ n \in \N $, $t\in [0,T]$
that
$\mathcal{O}^n([0,T])\subseteq P_n(H)$, 
$\eta \mathcal{O}^n \in C([0,T],P_n(H))$, 
\begin{equation} \label{eq:limsup:hP:O}
\limsup_{ m \to \infty} \sup_{s \in [0, T]} \| O_s - \mathcal{O}_s^m\|_{H_{\varrho}} =0,
\end{equation} 
$ \mathbb{O}^n_{t} = \mathcal{O}^n_{t} - \int_0^{t} e^{(t-s)(A-\eta)} \, \eta  \mathcal{O}^n_s \, ds$,
$\limsup_{m\to\infty} \sup_{ s \in [0,T]} \| \mathbb{O}_{ \lf s \rf_{h_m} }^m \|^2_{H} <\infty$, 
and
\begin{equation} \label{eq:limsup:fatO}
	\liminf_{m\to\infty} \int_0^T e^{ \int_r^T \, 2 \phi( \mathbb{O}_{\lf u \rf_{h_m} }^m )  \, du} \max\{ \Phi(\mathbb{O}_{ \lf r \rf_{h_m} }^m ), \|\mathbb{O}_r^m\|_{H}^2, 1, \smallint\nolimits_{0}^T \| \mathcal{O}_u^m \|_{H_{\varrho}}^{4+ 4\vartheta} \, du \} \, d r < \infty,
\end{equation}
and let
$ \Y^n \colon [0, T] \to H_{\varrho}$, $ n \in \N$, be functions satisfying
for all 
$ n \in \N $, $t \in [0,T]$ 
that
\begin{equation}
\Y_{t}^n = \int_0^{t} P_n \,  e^{  ( t - s ) A } \, \one_{ \{ \| \Y_{ \lf s \rf_{h_n} }^n \|_{ H_{\varrho} } + \| \mathcal{O}_{ \lf s \rf_{h_n} }^n\|_{ H_{\varrho} } \leq | h_n|^{ - \chi } \}} \, F \big(  \Y_{ \lf s \rf_{ h_n } }^n \big) \, ds + \mathcal{O}_{t}^n. 
\end{equation}
Then
\begin{enumerate}[(i)]
\item\label{item:bound:Xn} 
it holds that $\liminf_{n\to\infty}\sup_{s\in [0,T]}\|\Y^n_s\|_{H_{\varrho}}<\infty$,
\item \label{item:euT}
there exists a unique continuous function $X \colon [0,T] \to H_{\varrho}$ which satisfies for all $t \in [0,T]$ that $\int_0^t \|e^{ (t-s) A} \, F(X_s) \|_{H_{\varrho}} \, ds < \infty$ and
\begin{align}
 X_t = \int_0^t e^{ (t-s) A } \, F(X_s) \, ds + O_t,
\end{align}
and
\item \label{item:euT:pathwise} it holds that $\limsup_{n \to \infty} \sup_{t \in [0, T]} \| X_t- \Y_t^n \|_{H_{\varrho}}=0 $.

\end{enumerate}
\end{prop}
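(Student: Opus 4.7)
The plan is to combine the a priori estimate of Lemma~\ref{lem:apriori1} with the non-explosion/convergence Lemma~\ref{lem:convergence1} and a local existence/uniqueness result for Banach-space valued evolution equations (\citet[Corollary~8.4]{jentzen2018existence}) to obtain the three assertions in sequence.

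For Item~\ref{item:bound:Xn}, fix $n \in \N$. Since $\mathbb{H}_n$ is finite, the process $\Y^n$ takes values in the finite-dimensional subspace $P_n(H)$, on which $A$ restricts to a bounded operator, so Lemma~\ref{lem:apriori1} can be applied with underlying Hilbert space $P_n(H)$, basis $\mathbb{H}_n$, driving noise $\mathcal{O}^n$, correction $\mathbb{O}^n$, and step size $h_n$. The growth bound $\|F(v)\|_{H_{-\gamma}} \leq c(2\epsilon + \|v\|^2_H)$ is assumed outright; because $P_n$ is the orthogonal projection onto $\mathrm{span}(\mathbb{H}_n)$ and commutes with $A$, for $v \in P_n(H)$ one has $\langle v, F(v+w) \rangle_H = \langle v, P_n F(v+w) \rangle_H$, so the coercivity hypothesis of the proposition yields the coercivity hypothesis of the lemma upon replacing $\phi, \Phi$ by $2\phi, 2\Phi$. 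The Lipschitz hypothesis of the lemma follows from \eqref{eq:EUN:Flip} together with $\alpha \leq \nicefrac{1}{2}$ (which gives $\|(\eta - A)^{-\nicefrac{1}{2}} u\|_H \leq C\,\|u\|_{H_{-\alpha}}$) and an elementary expansion of $(1 + \|v\|_{H_\rho}^\vartheta + \|w\|_{H_\rho}^\vartheta)^2$ after squaring, the extra factor being absorbed into a larger $\vartheta$. The conclusion of Lemma~\ref{lem:apriori1} then bounds $\sup_{t \in [0,T]} \|\Y^n_t - \mathcal{O}^n_t\|_{H_\varrho}$ by an expression whose prefactors are independent of $n$ and whose integrand is dominated by the quantity appearing in \eqref{eq:limsup:fatO}; combined with \eqref{eq:limsup:hP:O} this yields $\liminf_{n \to \infty} \sup_{t \in [0,T]} \|\Y^n_t\|_{H_\varrho} < \infty$.

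For Items~\ref{item:euT} and~\ref{item:euT:pathwise}, the continuity of $O$ into $H_\varrho$, the local Lipschitz estimate \eqref{eq:EUN:Flip}, and the smoothing $(e^{tA})_{t \geq 0} \colon H_{-\alpha} \to H_\varrho$ with $\varrho + \alpha < 1$ fit the mild equation into the hypotheses of \citet[Corollary~8.4]{jentzen2018existence}, which produces a unique maximal mild solution $X \colon [0, \tau) \to H_\varrho$ for some $\tau \in (0, T]$ such that either $\tau = T$ or $\|X_t\|_{H_\varrho} \to \infty$ as $t \uparrow \tau$. I would then apply Lemma~\ref{lem:convergence1} with $V = H_\varrho$, $W = H_{-\alpha}$, $S_t = e^{tA}$, the parameter $\alpha$ of the lemma chosen to be $\varrho + \alpha \in [0,1)$, projections $P_n$, noises $\mathcal{O}^n \to O$, step sizes $h_n \to 0$, and $J = [0, t^{\star}]$ for arbitrary $t^{\star} \in [0, \tau)$: the local Lipschitz modulus $\Psi$ is finite on bounded sets by \eqref{eq:EUN:Flip}, the singularity estimate $\sup_{s \in (0,T)} s^{\varrho + \alpha} \|e^{sA}\|_{L(H_{-\alpha}, H_\varrho)} < \infty$ is standard, and \eqref{eq:limsup:hP:Pn}, \eqref{eq:limsup:hP:O}, together with Item~\ref{item:bound:Xn}, furnish the remaining hypotheses. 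The lemma yields $\sup_{s \in [0, t^{\star}]} \|X_s\|_{H_\varrho} < \infty$ with bound independent of $t^{\star}$, which precludes blow-up and forces $\tau = T$; this proves~\ref{item:euT}. Taking $J = [0, T]$ in the same application of Lemma~\ref{lem:convergence1} then gives~\ref{item:euT:pathwise}.

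The main obstacle I expect is the technical verification of the Lipschitz hypothesis of Lemma~\ref{lem:apriori1} from the proposition's weaker form \eqref{eq:EUN:Flip}, together with the bookkeeping required to guarantee that the constants in the a priori bound on $\|\Y^n_t - \mathcal{O}^n_t\|_{H_\varrho}$ are genuinely independent of $n$ (in particular, that restricting $A$ to $P_n(H)$ does not corrupt the interpolation-space constants). Once these are handled, promoting the local maximal solution to a global one via non-explosion is essentially automatic from Lemma~\ref{lem:convergence1}.
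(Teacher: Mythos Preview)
Your proposal is correct and follows essentially the same route as the paper's proof. The paper handles your ``main obstacle'' exactly as you anticipate: it invokes \citet[Lemma~2.4]{hutzenthaler2016strong} to convert \eqref{eq:EUN:Flip} into the squared Lipschitz form required by Lemma~\ref{lem:apriori1} (with $\vartheta$ replaced by $2\vartheta$ and a modified constant $\tilde\theta$ absorbing the interpolation embeddings), and it records $\tilde\theta$ explicitly so that the $n$-independence of the a~priori bound is visible; the extension from the maximal interval $J$ to $[0,T]$ is stated via \citet[Corollary~9.4]{jentzen2018existence} rather than the blow-up alternative directly, but this is the same argument.
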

\begin{proof}[Proof of Proposition~\ref{prop:euT:Xomega}]
Observe that \eqref{eq:limsup:hP:Pn} allows us to assume w.l.o.g.~that for all $n \in \N$ it holds that $P_n(H) \neq \{0\}$.
Throughout this proof we assume that for all $n \in \N$ it holds that $P_n(H) \neq \{0\}$, let $\varepsilon \in (0,1-\alpha-\varrho)$ be a real number,  let $ \tilde{\theta} \in [0, \infty)$ be the real number given by
\begin{align*}
\tilde{\theta} 
& = 
\max\{1, \|(\eta-A)^{-1} (\kappa-A)\|_{L(H)}\}  \\
& \quad \cdot 
\max\!\bigg\{ \! \big(8 \theta^2 + 2 \, \| F(0) \|_{ H_{ - \alpha } }^2 \big) \max\!\bigg\{ 1, \sup_{
	u \in H_{ \varrho } \backslash \{ 0 \} }\tfrac{ \| u \|_{ H_{ \rho } }^{ 2 + 2 \vartheta } 
}{\| u \|_{ H_{ \varrho } }^{ 2 + 2 \vartheta } } \bigg\}, \numberthis
\\
& \qquad 3 \, \theta^2  \bigg[\sup_{ u \in H_{ - \alpha } \backslash\{ 0 \} }\tfrac{ \| u \|_{ H_{  \nicefrac{-1}{2} } }^2 }{\| u \|_{ H_{ - \alpha } }^2 } \bigg] \bigg[1 + \sup_{ u \in H_{ \varrho } \backslash \{ 0 \} }\tfrac{
	\| u \|_{ H_{ \rho } }^{ 2 \vartheta } }{\| u \|_{H_{\varrho}}^{2 \vartheta}} \bigg]
\big(1+2^{\max\{2\vartheta-1, 0\}}\big)  \! \bigg\},
\end{align*}
let $\Psi \colon [0,\infty] \to [0,\infty]$ be the function which satisfies for all $r\in [0,\infty]$  that 
$\Psi(r)=\sup (\{\|F(v)-F(w)\|_{H_{-\alpha}}/\|v-w\|_{H_{\varrho}} \colon v, w \in H_{\varrho}, v\neq w, \|v\|_{H_{\varrho}}+\|w\|_{H_{\varrho}} \leq r \} \cup \{0\}) $, and let
$\psi \colon (0,T)\to (0,\infty)$
be the function which satisfies for all $t\in (0,T)$ that 
\begin{align} \label{prop:euT:phi}
\psi(t)=e^{t \, \kappa}-1 + t^\varepsilon.
\end{align}
Note that \eqref{eq:EUN:Flip} ensures that for all $r \in [0,\infty)$, $v, w \in H_{\varrho}$ satisfying $v\neq w$ and $\|v\|_{H_{\varrho}} + \|w\|_{\varrho} \leq r$ it holds that
\begin{align}
\begin{split} 
\tfrac{\|F(v)-F(w)\|_{H_{-\alpha}}}{\|v-w\|_{H_{\varrho}}} 
	& 
	\leq 
\tfrac{\|v-w\|_{H_{\rho}}}{\|v-w\|_{H_{\varrho}}} \, \theta \, ( 1 + \| v \|_{ H_{ \rho } }^{ \vartheta } + \|w\|_{H_{\rho}}^{\vartheta}) 
	 \\
	 & \leq   \left[ \sup\nolimits_{u\in H_{\varrho}\setminus\{0\}} \tfrac{\|u\|_{H_{\rho}}}{\|u\|_{H_{\varrho}}} \right]  \theta \, (1+2 r^{\vartheta})  <\infty.
\end{split}
\end{align}
Therefore, we obtain that for all $r\in [0,\infty)$ it holds that 
\begin{equation}
\label{eq:bounded_Psi}
\Psi(r)\leq  \theta \, (1+ 2 r^{\vartheta}) \left[ \sup\nolimits_{u\in H_{\varrho}\setminus\{0\}} \tfrac{\|u\|_{H_{\rho}}}{\|u\|_{H_{\varrho}}} \right] <\infty.
\end{equation} 
This establishes that
\begin{align}
\Psi([0,\infty))\subseteq [0,\infty).
\end{align}
Next observe that for all $n\in \N$, $t\in [0,T]$ it holds that
\begin{align}
\|\Y^n_t\|_{H_\varrho} \leq \|O_t\|_{H_{\varrho}} + \|\mathcal{O}^n_t - O_t\|_{H_{\varrho}} + \|\Y^n_t-\mathcal{O}^n_t\|_{H_{\varrho}}.
\end{align}
This and \eqref{eq:limsup:hP:O} yield that
\begin{align} \label{eq:disXn}
\begin{split}
\liminf_{n\to \infty} \sup_{t\in [0,T]} & \|\Y^n_t\|_{H_\varrho} \leq 
 \sup_{t\in [0,T]}\|O_t\|_{H_{\varrho}} 
 + \liminf_{n\to\infty}\sup_{t\in [0,T]}\|\Y^n_t-\mathcal{O}^n_t\|_{H_{\varrho}}.
\end{split}
\end{align}
Furthermore, note that \eqref{eq:EUN:Flip} and, e.g., Lemma~2.4 in \citet{hutzenthaler2016strong} (with 
$V=H_{\varrho}$, 
$\mathcal{V}=H_{\rho}$, 
$W=H_{-\alpha}$, 
$\mathcal{W}= H_{-\nicefrac{1}{2}}$, 
$\epsilon=\theta$, 
$\theta=  \max\{1,\|(\eta -A)^{-1} (\kappa -A)\|_{L(H)}\}^{-1} \, \tilde{\theta}$, 
$\varepsilon= \vartheta$, 
$\vartheta = 2 \vartheta$, 
in the notation of Lemma~2.4 in \citet{hutzenthaler2016strong}) ensures for all $v, w \in H_{\varrho}$  that 
\begin{align}
\label{eq:Fcondition:apriori}
\begin{split}
& \|(\eta-A)^{-\nicefrac{1}{2}}(F(v)-F(w))\|^2_{H} \\
& \leq  \| (\eta -A)^{-1} (\kappa -A) \|_{L(H)} \|F(v)-F(w)\|^2_{H_{-\nicefrac{1}{2}}}\\
 & \leq  \tilde{\theta} \left(\max\{1, \|v\|_{H_{\varrho}}^{{2\vartheta}}\}\|v-w\|_{H_{\rho}}^2 +  \|v-w\|_{H_{\rho}}^{2+{2\vartheta}}\right)
\end{split}
\end{align}
and  
\begin{align}
\label{eq:Fcondition2}
\|F(v)\|_{H_{-\alpha}}^2 \leq \tilde\theta \max\{ 1, \|v\|_{H_{\varrho}}^{2 + 2 \vartheta} \}.
\end{align}
In addition,  observe that the assumption that  $\forall \, n \in \N \colon \mathcal{O}^n([0,T]) \subseteq P_n(H)$ implies for all $n\in \N$ that $\mathbb{O}^n([0,T]) \cup \mathcal{X}^n([0,T]) \subseteq P_n(H)$.
Combining this, \eqref{eq:Fcondition:apriori},  \eqref{eq:Fcondition2}, and Lemma~\ref{lem:apriori1}
 (with
 $ H=P_n(H)$,
 $ \beta=1$,
 $ \theta= \tilde{\theta}$, 
 $ \vartheta = {2\vartheta}$, 
 $ A=(P_n(H) \ni v\mapsto Av \in P_n(H) )\in L(P_n(H))$,
 $ h=h_n$,
 $ Y = ( [0,T] \ni t \mapsto {\Y}^{n}_t \in P_n(H) )$, 
 $ \mathbb{O}= ( [0,T] \ni t \mapsto \mathbb{O}^{n}_t( \omega ) \in P_n(H))$, 
 $ \mathcal{O} = ( [0,T] \ni t \mapsto \mathcal{O}^{n}_t( \omega ) \in P_n(H) )$,
 $ F = ( P_n(H) \ni v\mapsto P_n F(v) \in  P_n(H) \cap H_{-\alpha} )\in C(P_n(H), P_n(H))$,
 $ \phi = 2 \phi |_{P_n(H)}$, $ \Phi = 2 \Phi |_{P_n(H)} $ 
 for $n\in \{ m \in \N \colon h_m \leq 1 \}$
 in the notation of Lemma~\ref{lem:apriori1})
 yields that for all $n\in  \{ m \in \N \colon h_m \leq 1 \}$  it holds that
\begin{align}
\begin{split}
& \sup\nolimits_{t\in [0,T]} \|\Y^n_t-\mathcal{O}^n_t\|_{H_\varrho}
	 \leq 
\frac{ 2c \, e^{T  \kappa} \, T^{(1-\varrho-\gamma)} }{(1- \varrho- \gamma)}    \Bigg(  \epsilon+ \sup\nolimits_{s\in [0,T]} \| \mathbb{O}^n_{ \lfloor s \rfloor_{h_n} }\|^2_H 
	\\
& \left. + \left(1+\tfrac{ \tilde\theta  e^{ \kappa(2+2\vartheta)} [ 1+ (\kappa + \sqrt{\eta} +\sqrt{\eta}|\kappa-\eta|e^{ \eta} ) \|(\kappa-A)^{ \rho - \varrho } \|_{L(H)} + \sqrt{\tilde\theta} +  \sqrt{\eta} ]^{2+2\vartheta}}{(1- \varphi)( 1-\alpha - \rho)^{2+2\vartheta}} \right) \right.
	\\
& \left. \quad \cdot \int_0^T e^{ \int_s^T 2 \phi( \mathbb{O}^n_{\lfloor u \rfloor_{h_n} } )  +4\eta \, du} \, \Big[  2\Phi\big(\mathbb{O}^n_{ \lfloor s \rfloor_{h_n}  } \big)  + \tfrac{\eta}{2} \|\mathbb{O}^n_s\|_{H}^2 \right.   
	\\
&  \qquad + \left|\max \!\big\{  1 , \eta,T  \big\}\right|^{4+6\vartheta} 
\max \!\big\{ 1,  \smallint\nolimits_{0}^T   \|  \sqrt{ \eta} \mathcal{O}^n_u \|^{4+4\vartheta}_{H_{\varrho}} \, du \big\} \Big] \, ds \Bigg).
\end{split}
\end{align}
Hence, we obtain that
\begin{align*}\label{eq:diff:YO}
& \liminf_{n\to\infty} \sup\nolimits_{t\in [0,T]} \|\Y^n_t-\mathcal{O}^n_t\|_{H_\varrho}
 \leq 
\frac{ 2c \, e^{T  \kappa} \, T^{(1-\varrho-\gamma)} }{(1- \varrho- \gamma)}    \Bigg(  \epsilon+ \limsup_{n\to\infty}\sup\nolimits_{s\in [0,T]} \| \mathbb{O}^n_{ \lfloor s \rfloor_{h_n} }\|^2_H 
	\\
& \quad \left. + \left(1+\tfrac{ \tilde\theta  e^{ \kappa(2+2\vartheta)} [ 1+ (\kappa + \sqrt{\eta} +\sqrt{\eta}|\kappa-\eta|e^{ \eta} ) \|(\kappa-A)^{ \rho - \varrho } \|_{L(H)} + \sqrt{\tilde\theta} +  \sqrt{\eta} ]^{2+2\vartheta}}{(1- \varphi)( 1-\alpha - \rho)^{2+2\vartheta}} \right) \right.
	\\
& \left. \qquad \cdot  \, \liminf_{n\to\infty} \int_0^T e^{ \int_s^T 2 \phi( \mathbb{O}^n_{\lfloor u \rfloor_{h_n} } )  +4\eta \, du} \, \Big[  2\Phi\big(\mathbb{O}^n_{ \lfloor s \rfloor_{h_n}  } \big)  + \tfrac{\eta}{2} \|\mathbb{O}^n_s\|_{H}^2 \right.   
	\\
&  \qquad \quad + \left|\max \!\big\{  1 , \eta,T  \big\}\right|^{4+6\vartheta} 
\max \!\big\{ 1,  \smallint\nolimits_{0}^T   \|  \sqrt{ \eta} \mathcal{O}^n_u \|^{4+4\vartheta}_{H_{\varrho}} \, du \big\} \Big] \, ds \Bigg). \numberthis
\end{align*}
Combining this,
 the assumption that $\limsup_{m\to \infty} \sup_{s\in [0,T]} \| \mathbb{O}^m_{ \lf s \rf_{h_m} } \|^2_H<\infty$, and \eqref{eq:limsup:fatO} 
assures that 
\begin{align}
\liminf_{n\to\infty} \sup_{t\in [0,T]} \|\Y_t^n-\mathcal{O}^n_t\|_{H_\varrho}<\infty.
\end{align}
The assumption that $O\in C([0,T], H_{\varrho})$ and \eqref{eq:disXn} therefore prove that 
\begin{align}
\liminf_{n\to\infty} \sup_{t\in [0,T]} \| \Y^n_t \|_{H_\varrho}<\infty.
\end{align}
This establishes Item~\eqref{item:bound:Xn}.
%
%
In the next step we observe that \eqref{prop:euT:phi} yields that
\begin{align}
\label{eq:limsup:0}
\limsup_{t \searrow 0} \psi(t)=0.
\end{align}
Moreover, note that the fact that  $ \forall \, r\in [0,1], t \in (0,T) \colon \|(t (\kappa-A))^r \, e^{t A}\|_{L(H)}\leq e^{t  \kappa}$ 
and
$\| (t (\kappa-A))^{-r} \left(e^{ t (A-\kappa) }- \Id_{H}\right)\|_{L(H)} \leq 1$ (cf., e.g., Lemma~11.36 in \citet{renardy2006introduction}) implies that for all $s\in [0,T), t\in (s,T]$ it holds that
\begin{align*} 
\label{eq:sup_dist_etA} 
& s^{(\alpha+\varrho+\varepsilon)} \, \|e^{ t A } - e^{ s A } \|_{L(H_{-\alpha},H_{\varrho})}  = s^{\varepsilon} \left\|(s(\kappa-A))^{(\alpha+\varrho)} e^{s A}\left(e^{ (t-s) A } - \Id_{H} \right) \right\|_{L(H)}\\
& \leq s^{\varepsilon} \left\|(s(\kappa-A))^{(\alpha+\varrho)} e^{s A}\left(e^{ (t-s) A } -  e^{ (t-s) \kappa }  \right) \right\|_{L(H)} \\
& \quad + s^{\varepsilon} \left\|(s(\kappa-A))^{(\alpha+\varrho)} e^{s A}\left(  e^{ (t-s) \kappa } - \Id_{H} \right) \right\|_{L(H)} \\
& \leq e^{ (t-s) \kappa } \left\|(s(\kappa-A))^{(\alpha+\varrho+\varepsilon)} e^{s A} \right\|_{L(H)} \left\|\left(\kappa-A\right)^{-\varepsilon} \left(e^{ (t-s) (A-\kappa) } - \Id_{H} \right)  \right\|_{L(H)}\\
& \quad + s^{\varepsilon} \left(e^{ (t-s) \kappa } - 1 \right) \left\|(s(\kappa-A))^{(\alpha+\varrho)} e^{s A} \right\|_{L(H)}\\
& \leq e^{t \kappa} \, (t-s)^\varepsilon + s^{\varepsilon}  \left(e^{ (t-s) \kappa } - 1 \right)   e^{s  \kappa}  \\
& \leq  \max\{1, T^\varepsilon\}\, e^{T  \kappa} \left(e^{ (t-s) \kappa } - 1 + (t-s)^\varepsilon \right) \! . \numberthis
\end{align*} 
and
\begin{align}
\label{eq:sup_norm_etA}
s^{\alpha+\varrho +\varepsilon} \, \|e^{sA} \|_{L(H_{-\alpha},H_{\varrho})}  =  s^{\varepsilon} \left\|(s(\kappa-A))^{(\alpha+\varrho)}e^{sA} \right\|_{L(H)} \leq s^{\varepsilon} e^{s \kappa} \leq T^{\varepsilon} e^{T \kappa} .
\end{align}
This together with \eqref{prop:euT:phi} yields that
\begin{align} 
\begin{split}
& \sup_{s \in (0,T)}  \left[ s^{\alpha+\varrho +\varepsilon} \left( \|e^{sA} \|_{L(H_{-\alpha},H_{\varrho})}  + \sup_{ t \in (s,T)} \frac{\|e^{ t A } - e^{ s A } \|_{L(H_{-\alpha},H_{\varrho})} }{|\psi(t-s)|} \right) \right] 
\\ 
&\leq 2 e^{T  \kappa} \max\{1,T^\varepsilon\} < \infty.
\end{split}
\end{align}
Combining this, \eqref{eq:bounded_Psi}, \eqref{eq:limsup:0}, and Item~(i) in Corollary~8.4 in \citet{jentzen2018existence} 
(with 
$(V,  \left\| \cdot \right\|_V) = (H_{\varrho}, \left\| \cdot \right\|_{H_{\varrho}})$, 
$(W,  \left\| \cdot \right\|_W) = (H_{-\alpha}, \left\| \cdot \right\|_{H_{-\alpha}})$, 
$S=\big((0, T) \ni t \mapsto ( H_{ - \alpha } \ni v \mapsto e^{ t A } v \in H_{ \varrho } ) \in L(H_{-\alpha},H_{\varrho} ) \big)$,
$\mathcal{S}=\big([0, T] \ni t \mapsto ( H_{ \varrho } \ni v \mapsto e^{ t A } v \in H_{ \varrho } ) \in L(H_{\varrho}) \big)$,
$o=O$,
$\phi=\psi$
in the notation of Corollary~8.4 in \citet{jentzen2018existence}) 
demonstrates that there exists a convex set $J \subseteq [0,T]$ with $\{0\} \subsetneq J$ such that there exists a unique continuous function $ X \colon J \to H_{\varrho}$ 
which satisfies for all $t\in J$ that
\begin{align}
\label{eq:bound:BochnerX}
\int_0^t \|e^{ (t-s) A} \, F(X_s) \|_{H_{\varrho}} \, ds < \infty , \qquad X_t = \int_0^t e^{ (t-s) A } \, F(X_s) \, ds + O_t,
\end{align}
and
\begin{align}
\label{eq:bound:TX}
\limsup\nolimits_{s \nearrow \sup(J)} \left[\tfrac{1}{(T-s)}+ \|X_s\|_{H_\varrho}\right]= \infty.
\end{align}
Next observe that Item~\eqref{item:bound:Xn} ensures that 
$
\liminf_{n\to\infty}\sup_{s\in J}\|\Y^n_s\|_{H_{\varrho}}<\infty
$.
Lemma~\ref{lem:convergence1} (with 
$(V,  \left\| \cdot \right\|_V) = (H_{\varrho}, \left\| \cdot \right\|_{H_{\varrho}})$, 
$(W,  \left\| \cdot \right\|_W) = (H_{-\alpha}, \left\| \cdot \right\|_{H_{-\alpha}})$, 
$ \alpha = \varrho + \alpha $, 
$ S = \big((0,T] \ni t \mapsto ( H_{ - \alpha } \ni v \mapsto e^{ t A } v \in H_{ \varrho } ) \in L(H_{-\alpha},H_{\varrho})\big) $, 
$ (P_n)_{n \in \N} = ( H_{ \varrho } \ni v \mapsto P_n( v ) \in H_{ \varrho } )_{n \in \N} $
in the notation of Lemma~\ref{lem:convergence1}) 
hence shows that for all $t\in J$ it holds that
\begin{align} \label{eq:bound:term2}
\limsup_{n\to \infty} \sup_{s\in [0,t]}\|X_s-\Y^n_s\|_{H_{\varrho}} =0.
\end{align}
This, in particular,  
implies that $\sup_{s\in J}\|X_s\|_{H_\varrho}<\infty$.
Item~(iii) in Corollary~9.4 in \citet{jentzen2018existence}
therefore assures that $J=[0,T]$. 
This together with \eqref{eq:bound:BochnerX} establishes Item~\eqref{item:euT}. Next observe that the fact that $T\in J$ and \eqref{eq:bound:term2} prove Item~\eqref{item:euT:pathwise}.
The proof of Proposition~\ref{prop:euT:Xomega} is thus completed.
\end{proof}

\section{The main result: Existence, uniqueness, and strong convergence} \label{sec:main}
In this section we accomplish in Theorem~\ref{thrm:EU}
global existence and  uniqueness of the solutions for certain class of SPDEs.  Moreover, Theorem~\ref{thrm:EU} shows an almost sure convergence 
of the approximation scheme~\eqref{boldface_X} below.
The other result of this section is Corollary~\ref{cor:strong}, which establishes a strong convergence of the approximation scheme and follows from Theorem~\ref{thrm:EU} and \citet[Theorem~3.5]{jentzen2019strong}.

\begin{sett} \label{section:EUN:main}
	Let $ ( H, \left< \cdot , \cdot \right>_H, \left\| \cdot \right\|_H ) $ be a separable $\mathbb{R}$-Hilbert space,
	let $\mathbb{H}\subseteq{H}$ be a nonempty
	orthonormal basis of $H$, 
	let $\eta, \kappa \in [0,\infty)$,
	let $\lambda \colon \mathbb{H} \to \mathbb{R}$ satisfy that $\inf_{b\in \mathbb{H}} \lambda_b>-\min\{\eta,\kappa\}$,
	let
	$ A \colon D(A) \subseteq H \to H $
	be the linear operator which satisfies
	$ D(A) = \{ v \in H \colon \sum_{b\in \mathbb{H}} | \lambda_b \langle b , v \rangle_H |^2 < \infty \} $
	and
	$ \forall \, v \in D(A) \colon A v = \sum_{b\in \mathbb{H}} - \lambda_b \langle b , v \rangle_H b$,
	let $ ( H_r, \left< \cdot , \cdot \right>_{ H_r }, \left\| \cdot \right\|_{ H_r } ) $, 
	$ r \in \R $, be a family of interpolation spaces associated to $ \kappa- A  $ (see, e.g., \citet[Section~3.7]{sell2002dynamics}), 
	 let 
	$ T,  \vartheta, c  \in (0,\infty)$,
	$ \theta , \epsilon \in [0, \infty)$, 
	$ \alpha \in [0,\nicefrac{1}{2}]$,
	$  \varphi \in [0,1)$, 
	$\gamma \in (0,1)$,
	$\rho \in [-\alpha,1-\max\{\alpha,\gamma\})$, 
$ \varrho \in (\rho, 1-\max\{\alpha,\gamma\})$,
$ \chi \in  (0, \min\{(\varrho-\rho)/(1+\vartheta), (1-\alpha-\rho)/(1+2\vartheta) \}] $,
	let $F \in C(H_{\varrho}, H_{-\alpha} )$,
	$\left(P_n\right)_{n\in\N} \subseteq L(H)$,
	let 
	$\mathbb{H}_n \subseteq \mathbb{H}$, $n\in\N$, be finite subsets of $\mathbb{H}$ satisfying for all $ n \in \N $, $u\in H$ that
	$ P_n(u) = \sum_{b\in \mathbb{H}_n} \langle b, u \rangle_H b $ and $ \liminf_{ m \to \infty } \inf( \{\lambda_b \colon b \in \mathbb{H} \backslash \mathbb{H}_m \} \cup \{\infty\}  ) = \infty $,
	let $ \phi, \Phi \colon H_1 \to [0,\infty)$ be functions
	such that
	for all $ n \in \N $, $ v, w \in P_n( H ) $ it holds
	that
	$F(v)\in H$,
	\begin{align} \label{eq:add:property}
	\|F(v)\|_{H_{-\gamma}} \leq c \left(2  \epsilon + \|v\|^2_H\right),
	\end{align}
	\begin{align}
	\left< v, P_n F( v + w ) \right>_H \leq \phi( w ) \| v \|^2_H + \varphi \|  (\eta -A) ^{\nicefrac{1}{2}}v \|^2_{ H } + \Phi( w ),
	\end{align} 
	and
	\begin{align}  \label{eq:EUN:Flip:3}
	\left\| F(v) - F(w) \right\|_{ H_{ - \alpha } } \leq \theta \, ( 1 + \| v \|_{ H_{ \rho } }^{ \vartheta } + \|w\|_{H_{\rho}}^{\vartheta}) \, \|v-w\|_{H_{\rho}},
	\end{align}
	let $\left(h_n \right)_{n\in\N} \subseteq (0, T]$ satisfy that $ \limsup_{ m \to \infty} h_m =0$,
let $ ( \Omega, \F, \P ) $ be a probability space,
let $ \Y^n \colon [0, T] \times \Omega \to H_\varrho$, $ n \in \N$, be stochastic processes,
let $\mathcal{O}^n \colon [0, T] \times \Omega \to H_\varrho$, $n\in \N$, and $O  \colon [0, T] \times \Omega \to H_\varrho$ be stochastic processes with continuous sample paths, 
let $\mathbb{X}^n, \mathbb{O}^n \colon [0,T] \times \Omega \to  H_{\varrho}$, $n\in \N$, be functions,
and assume
for all $ n \in \N $, $ t \in [ 0, T ] $ 
that
\begin{align}
\label{boldface_X}
\mathbb{X}_t^n  =  \smallint\nolimits_0^t P_n \,  e^{  ( t - s ) A } \, \one_{ \{ \| \Y_{ \lf s \rf_{h_n} }^n \|_{ H_{ \varrho } } +  \| \mathcal{O}_{ \lf s \rf_{h_n} }^n \|_{ H_{ \varrho } }  \leq | h_n|^{ - \chi } \}} \, F \big(  \Y_{ \lf s \rf_{ h_n } }^n \big) \, ds  +\mathcal{O}_t^n,
\end{align} 
$ \mathcal{O}^n( [0,T] \times \Omega ) \subseteq P_n( H ) $, $\mathbb{O}_t^n = \mathcal{O}_t^n - \int_0^t e^{(t-s)(A-\eta)} \, \eta \mathcal{O}_s^n \, ds$, and
$\P (\mathbb{X}_t^n  = \Y_t^n )=1$.
\end{sett}

\begin{theorem}[Existence, uniqueness, and almost sure convergence]
\label{thrm:EU}
Assume Setting~\ref{section:EUN:main}, let $\Omega_0 \in \left\{ B\in \mathcal{F}\colon  \mathbb{P}(B)=1\right\}$,   and assume that for $\omega \in \Omega_0$ it holds that
\begin{multline} \label{eq:thrm:EU:limsup:O_int}
 \liminf_{ m \to \infty}  \int_0^T e^{ \int_r^T 2\, \phi( \mathbb{O}_{\lf u \rf_{h_m} }^m(\omega) )  \, du} \Big[ 1+ |\Phi(\mathbb{O}_{ \lf r \rf_{h_m} }^m (\omega))| + \smallint\nolimits_{0}^T \| \mathcal{O}_u^m (\omega)\|_{H_{\varrho}}^{4+ 4\vartheta} \, du \\
+ \|\mathbb{O}_r^m (\omega)\|_H^{2}  \Big] \, dr< \infty  
\end{multline}
and 
\begin{align}
\label{eq:O:convergence}
\limsup_{m  \to \infty} \sup_{t \in [0,T]}\| O_t(\omega)- \mathcal{O}_t^{m} (\omega) \|_{ H_{ \varrho } } = 0.
\end{align}
Then 
\begin{enumerate}[(i)]
	\item \label{item:solution} there exists an up-to-indistinguishability unique stochastic process $X  \colon$ $ [0, T] \times \Omega \to H_\varrho$ with continuous sample paths which satisfies  that for all $t\in [0,T]$ it holds $\P$-a.s.~that
	\begin{align} \label{item:solution:mild}
	X_t = \int_0^t e^{ ( t - s ) A } \, F( X_s ) \, ds + O_t
	\end{align}
 and
	\item \label{item:convergence:prob}  
	there exists an event 
	$
	\Omega_1\in \left\{ B\in \mathcal{F}\colon  \mathbb{P}(B)=1\right\}
	$ 
	such that for all $\omega\in \Omega_1$ it holds that
\begin{align}
\limsup_{n \to \infty} \sup_{t \in [0, T]} \| X_t(\omega)- \mathbb{X}_t^{n}(\omega) \|_{H_\varrho}=0.
\end{align} 
\end{enumerate}
\end{theorem}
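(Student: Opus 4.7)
The plan is to reduce Theorem~\ref{thrm:EU} to a pathwise application of Proposition~\ref{prop:euT:Xomega}. First I would introduce, for each $n \in \N$, the countable set $G_n = \{k h_n \colon k \in \{0,1,\dots,\lfloor T/h_n \rfloor\}\} \subseteq [0,T]$ and set
\begin{equation*}
\Omega_1 = \Omega_0 \cap \bigcap_{n \in \N} \bigcap_{t \in G_n} \{ \omega \in \Omega \colon \mathbb{X}^n_t(\omega) = \Y^n_t(\omega) \}.
\end{equation*}
Since this intersects $\Omega_0$ with a countable intersection of $\P$-a.s.\ events (from $\P(\mathbb{X}^n_t = \Y^n_t)=1$ in Setting~\ref{section:EUN:main}), we have $\P(\Omega_1)=1$. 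For $\omega \in \Omega_1$, observe that the defining equation \eqref{boldface_X} only involves values of $\Y^n$ at the grid points in $G_n$, so on $\Omega_1$ the process $\mathbb{X}^n(\omega)$ satisfies the fixed-point equation of Proposition~\ref{prop:euT:Xomega} pathwise (with $\mathbb{X}^n(\omega)$ playing the role of $\Y^n$ there).

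Next I would verify pathwise, for $\omega \in \Omega_1$, the hypotheses of Proposition~\ref{prop:euT:Xomega}. The abstract conditions on $F$, $A$, $\phi$, $\Phi$, $\alpha$, $\varrho$, $\rho$, $\chi$ are inherited directly from Setting~\ref{section:EUN:main}. The continuity and finite-dimensional range of $\mathcal{O}^n$ give $\eta\mathcal{O}^n \in C([0,T],P_n(H))$, and the spectral-gap assumption $\liminf_m \inf(\{\lambda_b \colon b \in \mathbb{H} \setminus \mathbb{H}_m\} \cup \{\infty\}) = \infty$ yields the requisite limit $\limsup_m \int_0^T \|(\Id_{H_\varrho} - P_m|_{H_\varrho}) e^{sA}\|_{L(H_{-\alpha},H_\varrho)}\,ds = 0$ by a standard spectral/dominated-convergence argument. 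The uniform convergence $\sup_t\|O_t(\omega) - \mathcal{O}^m_t(\omega)\|_{H_\varrho} \to 0$ is exactly \eqref{eq:O:convergence}; and the integral bound \eqref{eq:limsup:fatO} of the Proposition is exactly \eqref{eq:thrm:EU:limsup:O_int}. The only auxiliary hypothesis that needs a separate argument is $\limsup_m \sup_s \|\mathbb{O}^m_{\lf s \rf_{h_m}}\|_H^2 < \infty$; I would deduce this from the uniform $H_\varrho$-convergence together with the representation $\mathbb{O}^n_t = \mathcal{O}^n_t - \int_0^t e^{(t-s)(A-\eta)}\eta\mathcal{O}^n_s\,ds$ and the fact that $H_\varrho \hookrightarrow H$ (shrinking $\Omega_1$ further on a null set if needed).

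With all Proposition hypotheses verified on $\Omega_1$, I apply Proposition~\ref{prop:euT:Xomega} pointwise to obtain, for each $\omega \in \Omega_1$, a unique continuous function $X^\omega \colon [0,T] \to H_\varrho$ satisfying the mild equation pathwise together with $\sup_{t \in [0,T]}\|X^\omega_t - \mathbb{X}^n_t(\omega)\|_{H_\varrho} \to 0$. I then define the process by $X_t(\omega) = X^\omega_t$ on $\Omega_1$ and $X_t(\omega) = 0$ on $\Omega \setminus \Omega_1$. This immediately gives item~\eqref{item:convergence:prob}. For item~\eqref{item:solution}, measurability of $X$ follows because $X$ is the pointwise limit (on $\Omega_1$) of the measurable processes $\mathbb{X}^n$ in the Polish space $H_\varrho$; continuity of sample paths is automatic. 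The mild equation $X_t = \int_0^t e^{(t-s)A}F(X_s)\,ds + O_t$ holds pathwise on $\Omega_1$, hence $\P$-a.s.\ for every $t$. Up-to-indistinguishability uniqueness follows from the pathwise uniqueness clause of Proposition~\ref{prop:euT:Xomega}: any two modifications with continuous sample paths in $H_\varrho$ that both satisfy the mild equation must coincide on $\Omega_1$, hence be indistinguishable.

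The main technical obstacle will be the verification that on a $\P$-a.s.\ event the Proposition hypothesis $\limsup_m \sup_s \|\mathbb{O}^m_{\lf s \rf_{h_m}}\|_H^2 < \infty$ holds; everything else reduces cleanly to the given assumptions once the pathwise identification $\mathbb{X}^n = \Y^n$ at grid points is secured. The other delicate point is the measurability of the limit process, but since $H_\varrho$ is separable and the approximants are measurable processes, this is routine.
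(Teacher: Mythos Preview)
Your proposal is correct and follows essentially the same strategy as the paper: restrict to a full-measure set on which $\mathbb{X}^n$ satisfies the recursive equation of Proposition~\ref{prop:euT:Xomega} pathwise, verify the Proposition's hypotheses (including the spectral-gap projection estimate and the $\sup_s\|\mathbb{O}^m_{\lf s\rf_{h_m}}\|_H$ bound via the representation of $\mathbb{O}^n$ and $H_\varrho\hookrightarrow H$), apply the Proposition to obtain the pathwise solution and convergence, and lift to a stochastic process via the measurability of the approximants. The only cosmetic differences are that the paper sets $X_t(\omega)=O_t(\omega)$ rather than $0$ on $\Omega\setminus\Omega_1$, and for uniqueness the paper invokes an external result (Corollary~6.1 in \citet{jentzen2018existence}) instead of the uniqueness clause of Proposition~\ref{prop:euT:Xomega}; your route via the Proposition is equally valid once one notes (by continuity of sample paths) that any competing solution satisfies the mild equation for all $t$ simultaneously on a full-measure set.
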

\begin{proof}[Proof of Theorem~\ref{thrm:EU}]
Throughout this proof let $\Omega_1 \subseteq \Omega$ be the set given by
\begin{align}
\label{eq:Omega_1}
\Omega_1   = \Omega_0 \cap \big\{\omega \in \Omega \colon  (\forall \, m \in \N, s \in  [0, T] \colon \mathbb{X}_{\lf s \rf_{h_m}}^m(\omega)= \Y_{\lf s \rf_{h_m}}^m(\omega) )\big\}
\end{align}  
and
let ${\bf X}^n \colon [0,T] \times  \Omega \to H, n\in \N,$ be the functions which satisfy for all $n \in \N$, $t \in [0, T]$  that
\begin{align}
\label{eq:tilde_X}
{\bf X}_t^n  =  \int_0^t P_n \,  e^{  ( t - s ) A } \, \one_{ \{ \|{\bf X}_{ \lf s \rf_{h_n} }^n \|_{ H_{ \varrho } } +  \| \mathcal{O}_{ \lf s \rf_{h_n} }^n \|_{ H_{ \varrho } }  \leq | h_n|^{ - \chi } \}} \, F \big(  {\bf X}_{ \lf s \rf_{ h_n } }^n \big) \, ds  +\mathcal{O}_t^n.
\end{align} 
Observe that the assumption that $\forall \, n\in \N$,  $t \in [0, T] \colon \P(\mathbb{X}_t^n = \Y_t^n)=1$ yields that
\begin{equation}
\bigg\{\omega \in \Omega \colon  \big(\forall \, m \in \N, s \in  [0, T] \colon \mathbb{X}_{\lf s \rf_{h_m}}^m(\omega)= \Y_{\lf s \rf_{h_m}}^m(\omega) \big)\bigg\} \in \F
\end{equation}
and 
\begin{equation}
\P\!\left(\forall \, m \in \N, s \in  [0, T] \colon \mathbb{X}_{\lf s \rf_{h_m}}^m= \Y_{\lf s \rf_{h_m}}^m\right)=1.
\end{equation}
Combining this and \eqref{eq:Omega_1} demonstrates that
\begin{equation}
\label{eq:prob:Omega_1}
\Omega_1\in  \left\{ B\in \mathcal{F}\colon  \mathbb{P}(B)=1\right\}
.
\end{equation}
Next note that the fact that for all $r\in [0,1]$, $t\in [0,T]$ it holds that
$\|(t (\kappa-A))^r \, e^{t A}\|_{L(H)}\leq e^{t  \kappa}$ (cf., e.g., Lemma~11.36 in \citet{renardy2006introduction})
implies for all $\varepsilon \in [0,1-\alpha-\varrho]$ that  
\begin{align} 
\label{eq:sup_s_alpha_finite} 
\begin{split} 
&\sup\nolimits_{ s \in [0, T] } \big( s^{ ( \varrho + \varepsilon + \alpha ) } \| e^{ s A } \|_{ L( H_{ - \alpha }, H_{ \varrho + \varepsilon} ) } \big) \\
& = \sup\nolimits_{ s \in [0, T] } \| ( s(\kappa-A) )^{ ( \varrho + \varepsilon + \alpha ) } \, e^{ s A } \|_{ L(H) } \leq e^{T  \kappa} < \infty.
\end{split} 
\end{align} 
Therefore, we obtain for all $ n \in \N $, $ t \in [0, T] $, $ \varepsilon \in [0, 1 - \varrho - \alpha) $ that
\begin{align} 
\begin{split} 
&
	\int_0^t \| ( \Id_{H_{\varrho}} - P_n|_{H_{\varrho}} ) \, e^{ s A } \|_{ L( H_{ - \alpha } , H_{ \varrho } ) } \, ds 
\\
& 
\leq \int_0^t \| \Id_{H_{\varrho+\varepsilon}} - P_n|_{H_{\varrho+\varepsilon}} \|_{ L( H_{ \varrho + \varepsilon } , H_{ \varrho } ) } \, \| e^{ s A } \|_{ L( H_{ - \alpha }, H_{ \varrho + \varepsilon } ) } \, ds 
\\ 
& 
\leq  \| ( \kappa- A )^{ - \varepsilon } ( \Id_H - P_n ) \|_{ L( H ) } \int_0^t  e^{T  \kappa} \, s^{ - ( \varrho + \varepsilon + \alpha ) } \, ds \\ 
& 
=  \frac{  e^{T  \kappa} \,  \| ( \kappa - A )^{ - 1 } ( \Id_H - P_n ) \|_{ L( H ) }^{ \varepsilon }  \, t^{ ( 1 - \varrho - \varepsilon - \alpha ) } }{ ( 1 - \varrho -  \varepsilon -\alpha) } 
. 
\end{split} 
\end{align} 
This together with the assumption that $ \liminf_{ n \to \infty} \inf( \{\lambda_b \colon b \in \mathbb{H} \backslash \mathbb{H}_n \} \cup \{\infty\} ) = \infty $ proves that 
\begin{align}
\label{eq:sup_Id_H_0} 
	\limsup_{ n \to \infty } \left( \int_0^T \| (\Id_{H_{\varrho}} - P_n|_{H_{\varrho}}  ) e^{ s A } \|_{ L( H_{ - \alpha } , H_{ \varrho } ) } \, ds \right) \! = 0 . 
\end{align} 
Moreover, observe that the assumption that  $ \forall \, n\in \N$, $t\in [0,T]$, $\omega \in \Omega \colon
	\mathbb{O}^n_{t}(\omega)= \mathcal{O}^n_{t}(\omega) - \int_0^{t} e^{(t-s)(A-\eta)} \, \eta  \mathcal{O}^n_s(\omega) \, d s
$
and the fact that  $\forall \, t\in [0,T] \colon $ $ \|e^{ t A }\|_{L(H)} \leq e^{t \kappa}$ (cf., e.g., Lemma~11.36 in \citet{renardy2006introduction})
imply that for all $n\in \N$, $t\in [0,T]$, $\omega \in \Omega$ it holds that 
\begin{align} \label{eq:diff:O}
\begin{split}
	& \|\mathbb{O}^n_{t}(\omega) - \mathcal{O}^n_{t}(\omega) \|_{H_{\varrho}} 
	\\ 
	& 
	\leq  \smallint_0^{t} \|e^{(t-s)(A-\eta)}\|_{L(H)} \, \|\eta  \mathcal{O}^n_s (\omega) \|_{H_\varrho} \, d s
	\leq \smallint_0^t e^{(t-s) (\kappa-\eta)} \, \|\eta  \mathcal{O}^n_s(\omega) \|_{H_\varrho} \, d s\\
& 
	\leq  \,  \eta \, \smallint_0^T e^{(T-s) |\kappa-\eta|}  \, \| \mathcal{O}^n_s(\omega) \|_{H_\varrho} \, d s
	\leq \,  \eta \, T \, e^{T |\kappa-\eta|}  \left[ \sup\nolimits_{s\in [0,T]} \| \mathcal{O}^n_s(\omega) \|_{H_\varrho} \right] \! .
\end{split}
\end{align}
Therefore, we obtain for all $\omega\in \Omega$ that
\begin{align}
\label{eq:boldface:O}
\begin{split}
&
	\limsup_{n\to\infty} \sup_{t\in [0,T]} \|\mathbb{O}^{n}_{t}(\omega) \|_{H_\varrho}\\
& 
	\leq \limsup_{n\to\infty} \left( \sup_{t\in [0,T]} \|\mathbb{O}^{n}_t(\omega)  - \mathcal{O}^{n}_t(\omega) \|_{H_\varrho} 
+  \sup_{t\in [0,T]} \|\mathcal{O}^{n}_t(\omega)  \|_{H_\varrho} \right)\\
& 
	\leq \left(  \eta \, T \, e^{T |\kappa-\eta|}+1\right)  \limsup_{n\to\infty} \sup_{t\in [0,T]} \|\mathcal{O}^{n}_t (\omega) \|_{H_\varrho}
.
\end{split}
\end{align}
Furthermore, note that the assumption that $O \co [0,T]\times \Omega \to H_\varrho$ has continuous sample paths and \eqref{eq:O:convergence} ensure that for all $\omega \in \Omega_1$ it holds that
\begin{align}
\begin{split}
& 
	\limsup_{n\to\infty} \sup_{t\in [0,T]} \|\mathcal{O}^{n}_t (\omega) \|_{H_\varrho} \\
& 
	\leq \limsup_{n\to\infty} \sup_{t\in [0,T]} \|\mathcal{O}^{n}_t(\omega) -O_t (\omega)\|_{H_\varrho} +\sup_{t\in [0,T]} \|O_t (\omega) \|_{H_\varrho}\\
& 
	= \sup_{t\in [0,T]} \|O_t (\omega)\|_{H_\varrho}<\infty
.
\end{split}
\end{align}
Combining this with \eqref{eq:boldface:O} we obtain for all $\omega \in \Omega_1$ that
\begin{equation}
\limsup_{n\to\infty}\sup_{t\in [0,T]} \big\|\mathbb{O}^{n}_{ \lf t \rf_{h_{n}}}(\omega) \big\|_{H_{\varrho}} < \infty.
\end{equation}
The fact that $H_\varrho \subseteq H$ continuously hence shows that for all $\omega \in \Omega_1$ it holds that
\begin{equation}
\limsup_{n\to\infty}\sup_{t\in [0,T]} \big\|\mathbb{O}^{n}_{ \lf t \rf_{h_{n}}}
(\omega)\big\|_H < \infty.
\end{equation}
This, 
\eqref{eq:sup_Id_H_0}, 
 and Proposition~\ref{prop:euT:Xomega} 
(with 
$O=\left([0,T]\ni t \mapsto O_t(\omega) \in H_\varrho\right)$, 
$(\mathcal{O}^n)_{n\in \N}$ $=\left([0,T]\ni t \mapsto \mathcal{O}^{n}_t(\omega) \in H_\varrho\right)_{n\in \N}$,
$(\mathbb{O}^n)_{n\in \N}=\left([0,T]\ni t \mapsto \mathbb{O}^{n}_t(\omega) \in H_\varrho\right)_{n\in \N}$,
$(\Y^n)_{n\in \N}=\left([0,T]\ni t \mapsto {\bf X}^{n}_t(\omega) \in H_\varrho\right)_{n\in \N}$
for $\omega \in \Omega_1$
in the notation of Proposition~\ref{prop:euT:Xomega}) 
assure that for all $\omega \in \Omega_1$ it holds that
\begin{equation}
\label{eq:bf:X}
\liminf_{ n \to \infty} \sup_{t \in [0, T]} \| {\bf X}^{n}_t(\omega) \|_{H_{\varrho}} < \infty
\end{equation}
and that
there exists a unique function
$
Y(\omega)\in C([0,T],H_{\varrho})
$ 
which satisfies for all $t \in [0,T]$ that 
$
\int_0^t \|e^{(t-s)A} F(Y_s(\omega))\|_{H_{\varrho}} \, d s<\infty
$
and 
$
Y_t(\omega) = \int_0^t e^{ ( t - s ) A } \, F( Y_s (\omega)) \, d s + O_t(\omega)
$. 
Let $X \colon [0,T]\times \Omega \to H_\varrho$ be the function which satisfies for all $t\in [0,T], \omega\in \Omega$ that
\begin{align}
X_t(\omega) =
\begin{cases} 
	Y_t(\omega) & \colon \omega \in \Omega_1\\
	O_t(\omega) & \colon \omega \notin \Omega_1
\end{cases}.
\end{align}
Observe that for all $\omega \in \Omega$ it holds that
\begin{align} \label{item:continuity:path} 
X(\omega) \in C([0,T], H_{\varrho}).
\end{align} 
Moreover, note that for all  $t\in [0,T]$, $\omega\in\Omega_1$ it holds that
\begin{align} \label{item:solution:path} 
X_t(\omega) = \int_0^t e^{ ( t - s ) A } \, F( X_s(\omega) ) \, d s + O_t(\omega).
\end{align}
Furthermore, observe that \eqref{eq:EUN:Flip:3} proves that for all $r \in [0,\infty)$, $v, w \in H_{\varrho}$ satisfying $v\neq w$ and $\|v\|_{H_{\varrho}} + \|w\|_{\varrho} \leq r$ it holds that
\begin{align}
\label{eq:F:lipschitz}
\begin{split} 
\tfrac{\|F(v)-F(w)\|_{H_{-\alpha}}}{\|v-w\|_{H_{\varrho}}} 
& 
\leq 
\tfrac{\|v-w\|_{H_{\rho}}}{\|v-w\|_{H_{\varrho}}} \, \theta \, ( 1 + \| v \|_{ H_{ \rho } }^{ \vartheta } + \|w\|_{H_{\rho}}^{\vartheta}) 
\\
& \leq   \left[ \sup\nolimits_{u\in H_{\varrho}\setminus\{0\}} \tfrac{\|u\|_{H_{\rho}}}{\|u\|_{H_{\varrho}}} \right]  \theta \, (1+2 r^{\vartheta})  <\infty.
\end{split}
\end{align}
Combining this,
the fact that $ \limsup_{ n \to \infty } \big\| P_n |_{H_{\varrho}} \big\|_{ L( H_{ \varrho } ) } = 1 < \infty $,
\eqref{eq:sup_Id_H_0}, 
the assumption that $\limsup_{ n \to \infty} h_n =0 $, 
 \eqref{item:solution:path},
 \eqref{eq:tilde_X}, 
 and
\eqref{eq:bf:X} allows us to apply Lemma~\ref{lem:convergence1} 
(with 
$(V,  \left\| \cdot \right\|_V) = (H_{\varrho}, \left\| \cdot \right\|_{H_{\varrho}})$, 
$(W,  \left\| \cdot \right\|_W) = (H_{-\alpha}, \left\| \cdot \right\|_{H_{-\alpha}})$, 
 $ T = T $, 
 $\chi = \chi$,
$ J=[0,T]$,
 $ F = F $, 
$ S = \big((0,T] \ni t \mapsto ( H_{ - \alpha } \ni v \mapsto e^{ t A } v \in H_{ \varrho } ) \in L(H_{-\alpha},H_{\varrho})\big) $, 
$ \alpha = \varrho + \alpha $, 
$ (P_n)_{n \in \N} = ( H_{ \varrho } \ni v \mapsto P_n( v ) \in H_{ \varrho } )_{n \in \N} $, 
$O=\left([0,T]\ni t \mapsto O_t(\omega) \in H_\varrho\right)$, 
$(\mathcal{O}^n)_{n \in \N} =\left([0,T]\ni t \mapsto \mathcal{O}^{n}_t(\omega) \in H_\varrho\right)_{n\in \N}$,
$ (h_n)_{n \in \N} = (h_n)_{n \in \N} $, 
$X=([0,T]\ni t \mapsto X_t(\omega)$ $\in H_\varrho)$, 
$(\mathcal{X}_n)_{n \in \N} = ([0,T]\ni t \mapsto {\bf X}^{n}_t(\omega) \in H_\varrho)_{n\in \N}$ 
for $\omega \in \Omega_1$ in the notation of Lemma~\ref{lem:convergence1}) to obtain  
for all $\omega \in \Omega_1$ that
\begin{align} \label{eq:strong:pathconv}
\limsup_{n\to \infty}\sup_{t\in [0,T]} \|{\bf X}^{n}_t(\omega) - X_t(\omega)\|_{H_{\varrho}}=0.
\end{align}
This, in particular, implies that for all $t\in [0,T]$, $\omega\in \Omega_1$ it holds that
\begin{align}
\label{eq:pathwise:limit}
\limsup_{n\to\infty} \|{\bf X}^{{n}}_t(\omega) - X_t(\omega)\|_{H_{\varrho}}=0.
\end{align}
Moreover, note that Lemma~2.3 in \citet{hutzenthaler2016strong} and the assumption that $\mathcal{O}^n \colon [0,T]\times \Omega \to H_\varrho$, $n \in \N$, are stochastic processes with continuous sample paths   ensure that  ${\bf X}^n:[0,T] \times \Omega \to H_{\varrho}$, $n \in \N$,  are stochastic processes with right-continuous sample paths. This, \eqref{eq:pathwise:limit}, the fact that $\forall \, t \in [0, T] \colon X_t|_{\Omega\setminus\Omega_1}=O_t|_{\Omega\setminus\Omega_1}$, 
and the fact that $\Omega_1 \in \mathcal{F}$ 
prove that $X \colon [0,T]\times \Omega \to H_{\varrho}$ is a stochastic process.
Combining this, the fact that $\P(\Omega_1)=1$, \eqref{item:continuity:path}, and \eqref{item:solution:path}
ensures $X \colon [0,T]\times \Omega \to H_\varrho$ is a stochastic process with continuous sample paths which  satisfies that for all $t\in [0,T]$  it holds $\P$-a.s.~that
\begin{align}
\label{eq:X_existence}
X_t = \int_0^t e^{ ( t - s ) A } \, F( X_s ) \, d s + O_t.
\end{align}
In the next step let $Z \colon [0, T] \times \Omega \to H_\varrho$ be another stochastic process with continuous sample paths
 which  satisfies that for all $t\in [0,T]$ it holds $\P$-a.s.~that 
$
 Z_t = \int_0^t e^{ ( t - s ) A } \, F( Z_s ) \, d s + O_t
$.
This ensures that there exists an event $\Omega_2 \in \{ B\in \mathcal{F}\colon  \mathbb{P}(B)=1\}$ such that for all $t \in [0, T]$, $\omega \in \Omega_2$ it holds that
 \begin{align}
 Z_t(\omega) = \int_0^t e^{ ( t - s ) A } \, F( Z_s(\omega) ) \, d s + O_t(\omega).
 \end{align}
Combining this, \eqref{eq:F:lipschitz}, \eqref{item:solution:path}, 
\eqref{eq:sup_s_alpha_finite}, and, e.g.,  Corollary~6.1 in \citet{jentzen2018existence} (with 
$(V,  \left\| \cdot \right\|_V) = (H_{\varrho}, \left\| \cdot \right\|_{H_{\varrho}})$, 
$(W,  \left\| \cdot \right\|_W) = (H_{-\alpha}, \left\| \cdot \right\|_{H_{-\alpha}})$, 
$T=T$,
$\tau=T$,
$F=F$,
$x^1=([0,T]\ni t \mapsto X_t(\omega) \in H_\varrho)$,
$x^2=([0,T]\ni t \mapsto Z_t(\omega) \in H_\varrho)$,
$ o = ([0,T]\ni t \mapsto O_t(\omega) \in H_\varrho)$,
$S=\big((0,T)\ni s \mapsto e^{s A} \in L(H_{-\alpha}, H_{\varrho})\big)$
for $\omega\in \Omega_1 \cap \Omega_2$ in the notation of Corollary~6.1 in \citet{jentzen2018existence})
demonstrates that for all $t\in [0,T]$, $\omega\in \Omega_1 \cap \Omega_2$ it holds that $X_t(\omega)=Z_t(\omega)$. This and the fact that $\Omega_1 \cap \Omega_2 \in  \{ B\in \mathcal{F}\colon  \mathbb{P}(B)=1\}$
show that the stochastic processes $X$ and $Z$ are indistinguishable.
This and \eqref{eq:X_existence} establish Item~\eqref{item:solution}.
In the next step we combine \eqref{eq:tilde_X},  \eqref{eq:strong:pathconv}, and the fact that $\forall \, n \in \N, t\in [0,T],  \omega \in \Omega_1 \colon {\bf X}^n_t(\omega)  =   \mathbb{X}_t^n(\omega)$ to obtain  that for all $\omega\in \Omega_1$ it holds that
\begin{align}
\limsup_{n \to \infty} \sup_{t \in [0, T]} \| X_t(\omega)- \mathbb{X}_t^{n}(\omega) \|_{H_\varrho}=0.
\end{align} 
This and \eqref{eq:prob:Omega_1} establish Item~\eqref{item:convergence:prob}. 
The proof of Theorem~\ref{thrm:EU} is thus completed.
\end{proof}

\begin{cor}[Strong convergence]\label{cor:strong}
Assume Setting~\ref{section:EUN:main},
let $p\in [2,\infty)$, and assume that $\limsup_{ n \to \infty} \sup_{ t \in [0,T]} \E[ \| \mathbb{O}_t^n \|_{H}^p] < \infty$,
\begin{align}
\label{eq:convergence_O}
\limsup_{n \to \infty}  \E \Big[ \!\min\!\left\{ 1, \sup\nolimits_{t \in [0,T]}\| O_t- \mathcal{O}_t^{n} \|_{ H_{ \varrho } } \right\}\! \Big] \! = 0,
\end{align}
and
\begin{multline} \label{eq:strong:hp_int}
\limsup_{ n \to \infty}  \E \bigg[ \int_0^T e^{ \int_r^T p\, \phi( \mathbb{O}_{\lf u \rf_{h_n} }^n )  \, du} \Big( 1+ \big|\Phi\big(\mathbb{O}_{ \lf r \rf_{h_n} }^n \big)\big|^{\frac{p}{2}} + \|\mathbb{O}_r^n\|_H^{p} \\
 + \smallint\nolimits_{0}^T \| \mathcal{O}_u^n \|_{H_{\varrho}}^{2p+ 2p\vartheta} \, du \Big) \, dr\bigg] < \infty.
\end{multline}
Then 
\begin{enumerate}[(i)]
	\item \label{item:cor:solution} there exists an up to indistinguishability unique stochastic process $X  \colon$ $ [0, T] \times \Omega \to H_\varrho$ with continuous sample paths which satisfies that for all $t\in [0,T]$ it holds $\P$-a.s.~that
	\begin{align} \label{cor:strong:X}
X_t = \int_0^t e^{ ( t - s ) A } \, F( X_s ) \, ds + O_t,
	\end{align}
	\item \label{item:exp}  it holds that $\mathbb{X}^n \colon [0,T] \times \Omega \to H_{\varrho} $, $n\in \N$, are stochastic processes with right-continuous sample paths and \begin{align}
	\limsup_{n \to \infty}  \E \Big[\!\min\!\left\{ 1, \sup\nolimits_{t \in [0,T]}\| X_t- \mathbb{X}_t^{n} \|_{ H_{ \varrho } } \right\} \Big] \! = 0,
	\end{align} 
	\item \label{item:pnorm} it holds that $ \limsup_{ n \to \infty } \sup_{ t \in [0,T] } \E\big[ \| X_t \|^p_{H} + \| \Y^n_t \|_{H}^p \big] < \infty $, and
	\item \label{item:strong} it holds for all $ q \in (0, p) $ that $ \limsup_{ n \to \infty } \sup_{ t \in [0,T] } \E\big[ \| X_t - \Y_t^n \|_{H}^q \big] = 0 $.
\end{enumerate}
\end{cor}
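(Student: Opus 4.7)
The plan is to reduce Corollary~\ref{cor:strong} to Theorem~\ref{thrm:EU} together with the strong convergence result [Jentzen2019, Theorem~3.5]. The key technical step is converting the $L^p$-type hypotheses of the corollary into the almost-sure conditions required by Theorem~\ref{thrm:EU}, which requires passing to a subsequence.

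First, I would fix an arbitrary subsequence $(n_k)_{k\in\N}\subseteq\N$. Assumption \eqref{eq:convergence_O} states convergence in probability of $\sup_{t\in[0,T]}\|O_t-\mathcal{O}^n_t\|_{H_\varrho}$ to zero (via the truncated metric), so by the standard Markov--Borel--Cantelli argument I can extract a further subsequence $(n_{k_j})_{j\in\N}$ along which this supremum converges to $0$ $\P$-a.s., yielding \eqref{eq:O:convergence}. For the integrability condition \eqref{eq:thrm:EU:limsup:O_int}, I apply Fatou's lemma to the (nonnegative) integrand appearing in \eqref{eq:strong:hp_int}; the inequalities $p/2\ge 1$, $p\ge 2$, and $2p+2p\vartheta\ge 4+4\vartheta$ (which follow from $p\ge 2$) together with H\"older make the $L^p$-integrand in \eqref{eq:strong:hp_int} dominate the one in \eqref{eq:thrm:EU:limsup:O_int} after suitable adjustment by multiplicative constants. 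Finiteness of $\sup_n\E[\,\cdots\,]$ in \eqref{eq:strong:hp_int} combined with Fatou then gives $\liminf_{j\to\infty}\int_0^T[\,\cdots\,](\omega)\,dr<\infty$ for $\P$-almost every $\omega$, i.e.\ exactly the pathwise hypothesis \eqref{eq:thrm:EU:limsup:O_int} of Theorem~\ref{thrm:EU} along $(n_{k_j})$.

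Next, applying Theorem~\ref{thrm:EU} to the subsequence $(n_{k_j})$ supplies an up-to-indistinguishability unique continuous stochastic process $X\colon[0,T]\times\Omega\to H_\varrho$ satisfying \eqref{cor:strong:X}, together with the $\P$-a.s.\ uniform convergence $\sup_{t\in[0,T]}\|X_t-\mathbb{X}^{n_{k_j}}_t\|_{H_\varrho}\to 0$. Since the limit $X$ is characterised by \eqref{cor:strong:X} alone and therefore independent of the chosen subsequence, the subsequence-of-subsequence principle shows that the full sequence $\sup_{t\in[0,T]}\|X_t-\mathbb{X}^n_t\|_{H_\varrho}$ converges to $0$ in probability, which is equivalent to the convergence in the truncated metric asserted in Item~\eqref{item:exp}. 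The right-continuity of the sample paths of $\mathbb{X}^n$ follows from the representation \eqref{boldface_X} together with Lemma~2.3 in \citet{hutzenthaler2016strong}, as already used in the proof of Theorem~\ref{thrm:EU}. This establishes Items~\eqref{item:cor:solution} and~\eqref{item:exp}.

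Finally, Items~\eqref{item:pnorm} and~\eqref{item:strong} follow directly from [Jentzen2019, Theorem~3.5], which, under Setting~\ref{section:EUN:main} and the hypotheses $\limsup_{n\to\infty}\sup_{t\in[0,T]}\E[\|\mathbb{O}^n_t\|_H^p]<\infty$ and \eqref{eq:strong:hp_int}, yields both the uniform $p$-th moment bound on $X$ and $\Y^n$ and the strong $L^q$-convergence for every $q\in(0,p)$, provided a mild solution $X$ in the sense of \eqref{cor:strong:X} exists --- which we have just supplied via Item~\eqref{item:cor:solution}. The main obstacle is the first step: the corollary only provides $L^p$ control, whereas Theorem~\ref{thrm:EU} requires pathwise bounds, so the subsequence extraction via Markov's inequality together with the Fatou argument is the central technical maneuver; the rest is essentially a bookkeeping exercise matching hypotheses between the two results.
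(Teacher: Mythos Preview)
Your argument is correct and follows the same overall strategy as the paper: extract a subsequence along which the almost-sure hypotheses of Theorem~\ref{thrm:EU} hold (via convergence in probability plus a Fatou-type bound), apply Theorem~\ref{thrm:EU} to obtain Item~\eqref{item:cor:solution}, and then invoke \citet[Theorem~3.5]{jentzen2019strong} for the remaining items.

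The one genuine difference lies in how Item~\eqref{item:exp} is obtained. You run the subsequence extraction inside a ``for every subsequence'' wrapper and then invoke the subsequence-of-subsequence characterisation of convergence in probability to upgrade the almost-sure convergence from Theorem~\ref{thrm:EU} to full-sequence convergence in the truncated metric. The paper instead extracts a \emph{single} subsequence solely to secure Item~\eqref{item:cor:solution}, and then reads off Item~\eqref{item:exp} directly from \citet[Theorem~3.5]{jentzen2019strong} (which already contains the truncated-metric convergence as one of its conclusions). Your route is more self-contained for Item~\eqref{item:exp} and makes the role of Theorem~\ref{thrm:EU} more visible; the paper's route is shorter because it simply lets \citet[Theorem~3.5]{jentzen2019strong} do all of Items~\eqref{item:exp}--\eqref{item:strong} in one shot. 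Both are valid, and the Fatou/domination step you describe (using $p\ge 2$ so that the exponents in \eqref{eq:strong:hp_int} dominate those in \eqref{eq:thrm:EU:limsup:O_int} up to constants absorbed by the ``$1+$'') is exactly the mechanism the paper relies on as well.
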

\begin{proof}[Proof of Corollary~\ref{cor:strong}]
First, note that \eqref{eq:convergence_O} implies there exists a strictly increasing function $k \colon \N \to \N$ such that
\begin{align}
\sum_{n=1}^{\infty}  \E \Big[ \! \min \big\{ 1, \sup\nolimits_{t \in [0,T]}\| O_t- \mathcal{O}_t^{k(n)} \|_{ H_{ \varrho } } \big\} \Big] < \infty.
\end{align}
Lemma~3.1 in \citet{jentzen2019strong} (with $(\Omega, \mathcal{F}, \P)= (\Omega, \mathcal{F}, \P)$, $E=C([0,T], H_{\varrho})$, $d= ( C([0,T], H_{\varrho}) \times C([0,T], H_{\varrho}) \ni (x,y) \mapsto \sup_{t \in [0,T]} \|x(t)-y(t)\|_{H_{\varrho}} \in [0,\infty) )$, $(X_n)_{n \in \N} = (\mathcal{O}^{k(n)})_{n \in \N}$, $X_0 = O$  in the notation of Lemma~3.1 in \citet{jentzen2019strong}) hence proves that 
\begin{align}
\label{eq:cor:strong:limsup:O}
\P\! \left(\limsup\nolimits_{n \to \infty} \sup\nolimits_{t \in [0,T]}\| O_t- \mathcal{O}_t^{k(n)} \|_{ H_{ \varrho } } = 0\right)\! =1.
\end{align}
Next observe that \eqref{eq:strong:hp_int} implies that
\begin{multline} 
\limsup_{ n \to \infty}  \E \bigg[ \int_0^T e^{ \int_r^T p\, \phi( \mathbb{O}_{\lf u \rf_{h_{k(n)}} }^{k(n)} )  \, du} \Big( 1+ \big|\Phi\big(\mathbb{O}_{ \lf r \rf_{h_{k(n)}} }^{k(n)} \big)\big|^{\frac{p}{2}} + \|\mathbb{O}_r^{k(n)}\|_H^{p} \\
+ \smallint\nolimits_{0}^T \| \mathcal{O}_u^{k(n)} \|_{H_{\varrho}}^{2p+ 2p\vartheta} \, du \Big) \, dr\bigg] < \infty.
\end{multline}
This, in particular, yields that
\begin{multline} \label{eq:cor:hp_int_bis}
 \P\bigg(\liminf\nolimits_{ n \to \infty}  \int_0^T e^{ \int_r^T 2\, \phi( \mathbb{O}_{\lf u \rf_{h_{k(n)}} }^{k(n)} )  \, du} \Big[ 1+ |\Phi(\mathbb{O}_{ \lf r \rf_{h_{k(n)}} }^{k(n)} )| + \|\mathbb{O}_r^{k(n)}\|_H^{2}  \\
 + \smallint \nolimits_{0}^T \| \mathcal{O}_u^{k(n)} \|_{H_{\varrho}}^{4+ 4\vartheta} \, du \Big] \, dr< \infty\bigg)=1.
\end{multline}
Combining this with \eqref{eq:cor:strong:limsup:O} and Item~\eqref{item:solution} in Theorem~\ref{thrm:EU} (with $P_n = P_{k(n)}$, $\mathbb{H}_n = \mathbb{H}_{k(n)}$, $h_n = h_{k(n)}$, $\mathcal{X}^n = \mathcal{X}^{k(n)}$, $\mathcal{O}^n = \mathcal{O}^{k(n)}$, $O=O$, $\mathbb{X}^n = \mathbb{X}^{k(n)}$, and $\mathbb{O}^n = \mathbb{O}^{k(n)}$ for $n \in \N$ in the notation of Theorem~\ref{thrm:EU}) assures that there exists an up-to-indistinguishability unique stochastic process $X  \colon$ $ [0, T] \times \Omega \to H_\varrho$ with continuous sample paths which satisfies that for all $t\in [0,T]$ it holds $\P$-a.s.~that
\begin{align} 
X_t = \int_0^t e^{ ( t - s ) A } \, F( X_s ) \, ds + O_t.
\end{align}
This establishes Item~\eqref{item:cor:solution}. 
Next note that
the assumption that $ \Y^n, \mathcal{O}^n \colon [0, T] \times \Omega \to H_\varrho$, $ n \in \N$, are stochastic processes and \eqref{boldface_X} prove that for all $n \in \N$ it holds that 
$\mathbb{X}^n \co [0,T]\times \Omega \to H_\varrho$ is also a stochastic process.
The assumption that $\mathcal{O}^n \co [0,T] \times \Omega \to H_{\varrho}$, $n\in\N,$ are continuous, and, e.g., Lemma~2.2 in \citet{hutzenthaler2016strong}  therefore ensure that  $\mathbb{X}^n \co [0,T] \times \Omega \to H_{\varrho}$, $n\in \N$, are stochastic processes with right-continuous sample paths.
Next observe that the fact that $H\subseteq H_{-1}=\bar{ H}^{H_{-1}}$ and the fact that for all $n\in \N$ it holds that $P_n\in L(H)$ imply that there exist $\tilde P_n \in L(H_{-1},H), n\in \N,$ such that for all $v\in H, n\in \N$ it holds that $\tilde P_n(v) = P_n(v)$.
Items~(i),(ii) and (iii) in Theorem~3.5 in~\citet{jentzen2019strong} (with 
 $ P_n=\tilde P_{n}$,  
 $n\in \N$,
in the notation of Theorem~3.5 in~\citet{jentzen2019strong}) therefore establish Items~\eqref{item:exp},\eqref{item:pnorm}, and \eqref{item:strong}. 
The proof of Corollary~\ref{cor:strong} is thus completed.
\end{proof}

\section{Example: Stochastic Burgers equations} \label{sec:burgers}

In this section we apply Corollary~\ref{cor:strong} to  the stochastic Burgers equations with space-time white noise.
Throughout this section we use the following notation. 
For a set $A \in \mathcal{B}(\R)$ we denote by $\lambda_A \colon \mathcal{B}(A) \to [0, \infty]$ the Lebesgue-Borel measure on $A$. 
For a measure space $(\Omega, \F, \mu)$, a measurable space $(S, \mathcal{S})$, a set $R$, and a function $f \colon \Omega \to R$ we denote by $\left[ f \right]_{\mu, \mathcal{S}} $ the set given by 
\begin{align}
\begin{split}
\left[f \right]_{\mu, \mathcal{S}} 
&=
\left\{ g \colon \Omega \to S \colon  g \text{ is } \F\slash \mathcal{S}\text{-measurable and } \right.
\\
	&  \qquad \left. \exists \, A \in \F \colon \mu(A)=0 \,\, \text{and} \,\, \{ \omega \in \Omega \colon f(\omega) \neq g(\omega)\} \subseteq A \right\}. 
\end{split}
\end{align}
We denote by 
$
(\und{\cdot}) \colon\ \{ [v]_{\lambda_{(0,1)}, \mathcal{B}(\R)} \in L^0(\lambda_{(0,1)}; \R) \colon v \in C( (0,1), \R ) \} \to C((0,1), \R)
$  
the function which satisfies for all $v \in C( (0,1), \R ) $  that
$
\und{[v]_{\lambda_{(0,1)}, \mathcal{B}(\R)}}=v
$.

\begin{sett} \label{setting:example:Burgers}
Let
$ c_1\in \R$,
$ T, c_0 \in (0,\infty)$,
$  \kappa \in [0, \infty)$,
$\alpha=\nicefrac{1}{2}$,
$\rho=\nicefrac{1}{8}$,
$\gamma\in (\nicefrac{3}{4},\nicefrac{7}{8})$,
$\varrho\in (\nicefrac{1}{8},1-\gamma)$,
$ \vartheta =1 $,
$ \chi \in  (0, \nicefrac{( \varrho - \rho ) }{( 1 + \vartheta) }] $,
let 
\begin{align}
( H, \left< \cdot , \cdot \right>_H, \left\| \cdot \right\|_H ) = (L^2(\lambda_{(0,1)}; \R), \langle \cdot , \cdot \rangle_{L^2(\lambda_{(0,1)}; \R)}, \left\| \cdot \right\|_{L^2(\lambda_{(0,1)}; \R)} ),
\end{align}
let
$\left(e_n\right)_{n\in \N} \subseteq H$ and $\left(\lambda_n \right)_{n\in \N} \subseteq (0, \infty)$
satisfy for all $ n \in \N $ that
\begin{align}
e_n = [ ( \sqrt{2} \sin(n \pi x) )_{x \in (0,1)}]_{\lambda_{(0,1)} , \mathcal{B}(\R)} \quad \text{ and } \quad \lambda_n = c_0 \pi^2 n^2,
\end{align}
let
$ A \colon D(A) \subseteq H \to H $
be the linear operator which satisfies
$ D(A) = \{ v \in H \colon \sum_{k \in \N} | \lambda_k \langle e_k , v \rangle_H |^2 < \infty \} $
and
$ \forall \, v \in D(A) \colon A v = \sum_{k \in \N} - \lambda_k \langle e_k , v \rangle_H \, e_k$,
let
$ ( H_r, \left< \cdot , \cdot \right>_{ H_r }, \left\| \cdot \right\|_{ H_r } ) $, $ r \in \R $,
be a family of interpolation spaces associated to $ \kappa - A $ (see, e.g., \citet[Section~3.7]{sell2002dynamics}),
let $F \colon H_{1/8} \to H_{-1/2}$ satisfy for all $v \in H_{1/8}$ that 
\begin{align}\label{setting:F}
F(v)= c_1(v^2)',
\end{align}
let $\left(P_n \right)_{n\in\N} \subseteq L(H)$ 
satisfy
for all $ u \in H $, $ n \in \N $ that
\begin{align} 
P_n(u) = \sum_{k =1 }^n \langle e_k, u \rangle_H \, e_k ,
\end{align} 
let $\left( h_n \right)_{n\in\N} \subseteq (0, T]$ satisfy that $ \limsup_{ m \to \infty} h_m =0$,
let $\xi \in H_{\nicefrac{1}{2}} $,
let $ ( \Omega, \F, \P ) $ be a probability space  with a normal filtration $(\mathbb{F}_{t})_{t \in [0,T]}$,
let $(W_t)_{t \in [0, T]}$ be an $\Id_H$-cylindrical $( \Omega, \F, \P, (\mathbb{F}_{t})_{t \in [0,T]} )$-Wiener process,
let
$ \Y^n, \mathcal{O}^n, \Psi^n \colon [0, T] \times \Omega \to P_n(H)$, $ n \in \N$,
be stochastic processes
which satisfy
for all $ n \in \N $, $ t \in [ 0, T ] $ that
$
\Psi^n_t = P_n \, e^{ t A } \, \xi + \mathcal{O}_t^n,
$
\begin{align}
\left[\mathcal{O}_t^n \right]_{\P, \mathcal{B}(H)} = \int_0^t P_n \, e^{(t-s)A}  \, dW_s,
\end{align}
and
\begin{equation}\label{eq:set:abstract}
\P \! \left( \Y_t^n = \Psi^n_t + \smallint_0^t P_n \,  e^{  ( t - s ) A } \, \one_{ \{ \| \Y_{ \lf s \rf_{h_n} }^n \|_{ H_{\varrho} } + \| \Psi^n_{ \lf s \rf_{h_n} }\|_{ H_{\varrho} } \leq | h_n|^{ - \chi } \}} \, F \big(  \Y_{ \lf s \rf_{ h_n } }^n \big) \, ds  \right)\! =1. 
\end{equation}
\end{sett}

\subsection{Properties of the nonlinearity}

The following lemma shows that the function $F$ in Setting~\ref{setting:example:Burgers} above satisfies the elementary property \eqref{eq:add:property}.

\begin{lemma}
\label{lem:normHr:Burgers}
Assume Setting~\ref{setting:example:Burgers} and let $r\in (\nicefrac34,\infty)$. 
Then it holds for all $v\in H_{\nicefrac12}$ that $F(v)\in H$ and that
\begin{align}
\| F(v)\|_{H_{-r}} \leq  |c_1|  
\left(\sum_{n\in\N} 2 \pi^2 n^2 \, (\kappa+ c_0 \, \pi^2\, n^2 )^{-2r}  \right
)^{\!\! \nicefrac12}  \|v\|^2_{H} < \infty.
\end{align}
\end{lemma}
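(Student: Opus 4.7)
The plan is to compute the $H_{-r}$-norm of $F(v) = c_1 (v^2)'$ by expanding in the eigenbasis $(e_n)_{n \in \N}$ of $\kappa - A$ and using integration by parts to transfer the derivative from $v^2$ onto the (smooth) eigenfunctions.

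First I would verify that $F(v) \in H$ for $v \in H_{1/2}$. Since $H_{1/2}$ is continuously embedded in the Sobolev space $W^{1,2}_0((0,1); \R)$, which in the one-dimensional setting is continuously embedded in $C([0,1]; \R)$, the function $v$ has a bounded continuous representative vanishing at the endpoints. Hence $v^2$ lies in $W^{1,2}((0,1); \R)$ with weak derivative $2 v v' \in L^2$, so $F(v) = c_1 (v^2)'$ is a well-defined element of $H$.

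Next, I would use the explicit characterization of the $H_{-r}$-norm associated to $\kappa - A$: for every $u \in H$,
\begin{equation*}
\|u\|_{H_{-r}}^2 = \sum_{n \in \N} (\kappa + \lambda_n)^{-2r} \, |\langle e_n, u\rangle_H|^2 = \sum_{n \in \N} (\kappa + c_0 \pi^2 n^2)^{-2r} \, |\langle e_n, u\rangle_H|^2.
\end{equation*}
Applied to $u = F(v)$, integration by parts (valid because $v^2 \in W^{1,2}$ and $e_n(0) = e_n(1) = 0$) yields
\begin{equation*}
\langle e_n, F(v)\rangle_H = c_1 \int_0^1 e_n(x) (v^2)'(x)\, dx = -c_1 \int_0^1 e_n'(x)\, v^2(x)\, dx.
\end{equation*}
Since $e_n'(x) = \sqrt{2}\, \pi n \cos(n\pi x)$ satisfies $\|e_n'\|_{L^\infty((0,1); \R)} = \sqrt{2}\, \pi n$, I obtain the pointwise bound
\begin{equation*}
|\langle e_n, F(v)\rangle_H| \leq |c_1|\, \sqrt{2}\, \pi n \int_0^1 v(x)^2\, dx = |c_1|\, \sqrt{2}\, \pi n \, \|v\|_H^2.
\end{equation*}

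Finally, I would substitute this bound into the series expression for $\|F(v)\|_{H_{-r}}^2$ and factor out $\|v\|_H^4$, obtaining exactly the claimed estimate
\begin{equation*}
\|F(v)\|_{H_{-r}} \leq |c_1| \left(\sum_{n \in \N} 2 \pi^2 n^2\, (\kappa + c_0 \pi^2 n^2)^{-2r}\right)^{\!\nicefrac{1}{2}} \|v\|_H^2.
\end{equation*}
Convergence of the series reduces to $\sum_n n^{2 - 4r} < \infty$, which holds precisely under the hypothesis $r > \nicefrac{3}{4}$. The only mildly delicate step is the justification of the integration by parts and the embedding $H_{1/2} \hookrightarrow C([0,1]; \R)$ needed to make sense of $F(v)$ as an element of $H$; everything else is a routine eigenbasis computation.
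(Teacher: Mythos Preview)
Your proof is correct and follows essentially the same idea as the paper: integrate by parts to move the derivative from $v^2$ onto the sine eigenfunctions, use the bound $\|e_n'\|_{L^\infty} = \sqrt{2}\,\pi n$, and then sum. The only cosmetic difference is packaging: the paper writes $\|F(v)\|_{H_{-r}} = \sup_{u\neq 0}\|u\|_H^{-1}\,|\langle F(v),(\kappa-A)^{-r}u\rangle_H|$ and bounds $\|((\kappa-A)^{-r}u)'\|_{L^\infty}$ via Cauchy--Schwarz on the series, whereas you compute each coefficient $\langle e_n,F(v)\rangle_H$ directly and insert it into the Parseval expansion---both routes yield exactly the same constant.
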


\begin{proof}[Proof of Lemma~\ref{lem:normHr:Burgers}]
Throughout the proof let $v\in H_{\nicefrac12}$. Observe that, e.g., Lemma~4.5 in \citet{jentzen2016exponential} ensures that $F(v)\in H$.  
Hence, we obtain that
\begin{align}\label{eq:normHgamma}
\begin{split}
\| F(v)\|_{H_{-r}} 
& =
\sup\nolimits_{u \in H\setminus\{0\}} \tfrac{ \left| \langle  F (v) ,   (\kappa-A)^{-r}\, u \rangle_H \right|}{\|u\|_H} \\
& = |c_1|  \left[
\sup\nolimits_{u \in H\setminus\{0\}} \tfrac{ \left| \langle  (v^2)' ,  (\kappa-A)^{-r}\,u \rangle_H \right|}{\|u\|_{H}} \right] \! .
\end{split}
\end{align}
Next note that for all $u\in H$ it holds that $(\kappa-A)^{-r} \, u \in H_r$
and  
\begin{align}
\begin{split}
 (\kappa-A)^{-r} \, u & = \smallsum_{n\in\N} (\kappa+ \lambda_n)^{-r} \, \langle  u , e_n \rangle_H \, e_n.
\end{split}
\end{align}
This ensures that for all $u\in H$ it holds that
\begin{align*}
 \left( (\kappa-A)^{-r}\, u \right)'  &= \smallsum_{n\in\N}  (\kappa+ \lambda_n)^{-r}  \, \langle  u , e_n \rangle_H \, e'_n \numberthis\\
& =  \smallsum_{n\in\N}  (\kappa+ c_0 \, \pi^2\, n^2 )^{-r}  \, \langle  u , e_n \rangle_H \, \left[\left( \sqrt{2} \pi n \, \cos(\pi n x)  \right)_{x \in (0,1)}\right]_{\lambda_{(0,1)} , \mathcal{B}(\R)} \!\! .
\end{align*}
The Cauchy-Schwarz inequality 
hence imply that for all $u\in H$ it holds that
\begin{align*}
\left\|\left( (\kappa-A)^{-r}\, u \right)' \right\|_{L^{\infty}(\lambda_{(0,1)};\R)}
	& \leq 
\smallsum_{n\in\N} \sqrt{2} \pi n \,(\kappa+ c_0 \, \pi^2\, n^2 )^{-r}  \, \left| \langle  u , e_n \rangle_H \right|
	\\
	& \leq 
\left(\smallsum_{n\in\N} 2 \pi^2 n^2 \, (\kappa+ c_0 \, \pi^2\, n^2 )^{-2 r} \right)^{\! \nicefrac12} 
\! 
\left(\smallsum_{n\in \N} \langle e_n, u\rangle_H^2 \right)^{\! \nicefrac12}
	\\
	& = 
\left(\smallsum_{n\in\N} 2 \pi^2 n^2 \, (\kappa+ c_0 \, \pi^2\, n^2 )^{-2r}  \right)^{\!\nicefrac12} \! \|u\|_H. \numberthis
\end{align*}
Combining this with \eqref{eq:normHgamma} yields that
\begin{align*}\label{eq:normHgamma:2}
 \| F(v)\|_{H_{-r}} 
	& 
\leq  |c_1| \, \left[
\sup\nolimits_{u \in H\setminus\{0\}} \tfrac{ \left| \langle  v^2 , \left(  (\kappa-A)^{-r} \, u \right)' \rangle_H \right|}{\|u\|_{H}} \right] 
	\\
	& 
\leq |c_1|  \left[
\sup\nolimits_{u \in H\setminus\{0\}} \tfrac{ \|u\|_H }{\|u\|_{H}} \right] \left(\smallsum_{n\in\N} 2 \pi^2 n^2 \, (\kappa+ c_0 \, \pi^2\, n^2 )^{-2r}  \right)^{\!\nicefrac12} \|v^2\|_{L^1(\lambda_{(0,1)}; \R)}\\
& = |c_1|  \left(\smallsum_{n\in\N} 2 \pi^2 n^2 \, (\kappa+ c_0 \, \pi^2\, n^2 )^{-2r}  \right)^{\!\nicefrac12} \|v\|^2_H. \numberthis
\end{align*}
Moreover, observe that the fact that $r>\nicefrac{3}{4}$ assures that
$\sum_{n\in\N} 2 \pi^2 n^2 \, (\kappa+ c_0 \, \pi^2\, n^2 )^{-2r}  < \infty$.
The proof of Lemma~\ref{lem:normHr:Burgers} is thus completed.
\end{proof}


\subsection{Existence and uniqueness of the solution and strong convergence of the approximation scheme}
\begin{cor} \label{cor:strong:burgers}
Assume Setting~\ref{setting:example:Burgers} and let $p\in (0,\infty)$.
Then there exists a unique stochastic process $ X\colon [0, T] \times \Omega \to H_{\varrho}$ with continuous sample paths which satisfies for all $t \in [0, T]$  that  
\begin{align} \label{cor:strong:burgers:bochner}
\P\left( \int_0^t  \left\|e^{  ( t - s ) A}  \,F (  X_s )\right\|_{H_{\varrho}} \, ds <\infty \right)\!=1,
\end{align}
that
\begin{align} \label{cor:strong:burgers:X}
[X_t ]_{\P, \mathcal{B}(H)}  =    \left[ e^{ t A }   \xi  + \smallint_0^t e^{  ( t - s ) A}  \, F (  X_s ) \, ds \right]_{\P, \mathcal{B}(H)} \!\! + \int_0^t e^{(t-s)A} \, dW_s,
\end{align}
and that
\begin{align} \label{cor:strong:burgers:strong}
\limsup_{n \to \infty} \sup_{t \in [0,T]} \E \big[ \| X_t -\Y_t^n \|_H^p \big] \! = 0.
\end{align}
\end{cor}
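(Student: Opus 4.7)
The plan is to derive Corollary~\ref{cor:strong:burgers} as a direct instance of Corollary~\ref{cor:strong} applied to the concrete Burgers data of Setting~\ref{setting:example:Burgers}. I would set $\eta = 0$ so that $\mathbb{O}^n = \mathcal{O}^n$, identify the abstract driving process with the initial-value-plus-convolution term $O_t = e^{tA}\xi + \int_0^t e^{(t-s)A}\,dW_s$ and $\mathcal{O}^n_t = \Psi^n_t$, and check that the parameter choices of Setting~\ref{setting:example:Burgers} (namely $\alpha = \nicefrac12$, $\rho = \nicefrac18$, $\varrho \in (\nicefrac18, 1-\gamma)$, $\vartheta = 1$, $\gamma \in (\nicefrac34, \nicefrac78)$) satisfy all numerical constraints of Setting~\ref{section:EUN:main}.

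The first block of work is the three structural inequalities on $F$. The $H_{-\gamma}$ growth bound is supplied directly by Lemma~\ref{lem:normHr:Burgers} with $r = \gamma > \nicefrac34$ and $\epsilon = 0$. The Lipschitz-type bound \eqref{eq:EUN:Flip:3} in $H_{-\nicefrac12}$ with $\vartheta = 1$ is the standard Burgers estimate, as proved e.g.\ in \cite[Corollary~5.6]{jentzen2019strong}. The coercivity estimate is obtained by writing $\langle v, P_n F(v+w)\rangle_H = -c_1 \langle v', (v+w)^2\rangle_H$ via integration by parts with Dirichlet boundary conditions, observing that the diagonal term $\langle v', v^2\rangle_H$ vanishes, and absorbing the two cross terms $\langle v', vw\rangle_H$ and $\langle v', w^2\rangle_H$ into a $\varphi$-fraction of $\|(-A)^{\nicefrac12}v\|_H^2$ by Young's and H\"older's inequalities, yielding functions $\phi, \Phi$ polynomial in a Sobolev norm of $w$; this reduction is also explicit in \cite{jentzen2019strong}.

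The second block is the statistical input of Corollary~\ref{cor:strong}. For $\varrho < \nicefrac14$ the eigenvalue growth $\lambda_n \sim n^2$ combined with the It\^o isometry yields $\sup_{n \in \N}\sup_{t \in [0,T]} \E[\|\Psi^n_t\|_{H_\varrho}^q] < \infty$ for every $q \in [1,\infty)$, hence the uniform $p$-th moment bound on $\|\mathbb{O}^n_t\|_H$. The Galerkin convergence $\E[\min\{1,\sup_t \|O_t - \Psi^n_t\|_{H_\varrho}\}] \to 0$ is classical, using that $\xi \in H_{\nicefrac12}$ and $\varrho < \nicefrac14 < \nicefrac12$. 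The exponential integrability \eqref{eq:strong:hp_int} is the most delicate point: since each $\Psi^n$ is a Gaussian process and $\phi$ has polynomial growth of modest degree in a weak Sobolev norm of its argument, Fernique's theorem (or Borell's inequality) delivers $\sup_n \E[\exp(\int_0^T p\, \phi(\mathcal{O}^n_{\lf u \rf_{h_n}})\,du)] < \infty$, and H\"older's inequality then combines this with the polynomial moment bounds above to produce the full expectation.

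Having verified all hypotheses, Corollary~\ref{cor:strong} provides the unique continuous mild solution $X \colon [0,T]\times\Omega \to H_\varrho$ with strong convergence $\E[\|X_t - \Y^n_t\|_H^q] \to 0$ for every $q < p$; since $p$ may be chosen arbitrarily large in the previous step, this yields \eqref{cor:strong:burgers:strong} for every $p \in (0,\infty)$. The identity \eqref{cor:strong:burgers:X} is then \eqref{item:solution:mild} rewritten via $O_t = e^{tA}\xi + \int_0^t e^{(t-s)A}\,dW_s$, and the Bochner integrability \eqref{cor:strong:burgers:bochner} follows from the continuity of $X$ in $H_\varrho$, Lemma~\ref{lem:normHr:Burgers}, and the semigroup estimate $\|e^{sA}\|_{L(H_{-\gamma},H_\varrho)} \leq C\, s^{-(\varrho+\gamma)}$ together with $\varrho + \gamma < 1$. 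The main obstacle is the balance described in the third paragraph: $\phi$ must be large enough to dominate the Burgers nonlinearity but of sub-Gaussian growth in a Sobolev norm of $\mathcal{O}^n$, a constraint that is precisely what forces the spatial-regularity window $\varrho \in (\nicefrac18, \nicefrac14)$ recorded in Setting~\ref{setting:example:Burgers}.
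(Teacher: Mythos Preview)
Your overall strategy---reduce Corollary~\ref{cor:strong:burgers} to Corollary~\ref{cor:strong} by verifying the three structural inequalities on $F$ and the statistical hypotheses on the noise---matches the paper, and your handling of the $F$-inequalities (Lemma~\ref{lem:normHr:Burgers} for \eqref{eq:add:property}, the estimates from \cite{jentzen2019strong} for \eqref{eq:EUN:Flip:3} and the coercivity bound) is essentially what the paper does.

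There is, however, a genuine gap in your verification of the exponential integrability \eqref{eq:strong:hp_int}. You set $\eta=0$, so that $\mathbb{O}^n=\Psi^n$, and then appeal to Fernique's theorem. But the Burgers coercivity computation you outline forces $\phi(w)$ to grow \emph{quadratically} in $\|w\|_{L^\infty}$ (the cross term $2c_1\langle v',vw\rangle_H$ is bounded by $2|c_1|\,\|v'\|_H\|v\|_H\|w\|_{L^\infty}$, and Young's inequality then produces $\|w\|_{L^\infty}^2$ in front of $\|v\|_H^2$; this is exactly the $\phi$ the paper writes down). Fernique's theorem gives $\E[\exp(\alpha\|G\|^2)]<\infty$ only for \emph{sufficiently small} $\alpha>0$, whereas \eqref{eq:strong:hp_int} must hold for every $p\in[2,\infty)$. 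Since the variance of $\sup_{t}\|\Psi^n_t\|_{L^\infty}$ does not tend to zero as $n\to\infty$, the required exponential moment blows up for large $p$, and your argument fails at precisely this point.

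The paper avoids this by \emph{not} taking $\eta=0$. It invokes \cite[Lemma~5.6]{jentzen2019strong}, which produces an $\eta\in[0,\infty)$ together with the transformed processes $\mathfrak{Q}^n$ (playing the role of $\mathbb{O}^n$) for which \eqref{eq:strong:limsup}---and hence \eqref{eq:strong:hp_int}---holds. The whole point of the $\eta$-shift $\mathbb{O}^n_t=\mathcal{O}^n_t-\int_0^t e^{(t-s)(A-\eta)}\eta\,\mathcal{O}^n_s\,ds$ built into Setting~\ref{section:EUN:main} is that for large $\eta$ the process $\mathbb{O}^n$ is the stochastic convolution against the more strongly damped semigroup, so its $L^\infty$-variance can be made small enough that the quadratic exponent is integrable for the given $p$. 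Once you replace ``$\eta=0$ plus Fernique'' by a citation of \cite[Lemma~5.6]{jentzen2019strong} (which packages both the convergence \eqref{eq:convergence_O} and the exponential bound \eqref{eq:strong:hp_int}), the rest of your outline goes through as written.
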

\begin{proof}[Proof of Corollary~\ref{cor:strong:burgers}]
Throughout this proof let
$q\in [\max\{2,p\},\infty)$ be a real number, let $\theta, c \in (0,\infty)$ be the real numbers given by 
\begin{equation}
\theta =  |c_1|  |c_0|^{-\nicefrac{1}{2}}  \bigg[\sup\nolimits_{u \in  H_{ 1/8 }\backslash \{0\}}  \tfrac{\|u\|^2_{L^4(\lambda_{(0,1)}; \R)}}{\|(-A)^{\nicefrac{1}{8}} u\|^2_H} \bigg]+1
\end{equation}
and 
$
c=|c_1| \,  \left(\sum_{n\in\N} 2 \pi^2 n^2 \, (\kappa+ c_0 \, \pi^2\, n^2 )^{-2\gamma}  \right)^{\!\nicefrac12}
$, 
and let $ \phi , \Phi \colon H_1 \to [0,\infty)$ be the functions which satisfy for all $ v \in H_{1}$  that  
\begin{equation}
\phi(v) =  \max\left\{\tfrac{2|c_1|^2 }{c_0} , 4\right\}  \left[1+\sup\nolimits_{x \in (0,1)} |\und{v}(x)|^2 \right]
\end{equation}
and 
\begin{equation}
\Phi(v)= \max\left\{\tfrac{2|c_1|^2 }{c_0} , 4\right\} \left[1+ \sup\nolimits_{x \in (0,1)} |\und{v}(x)|^{ \max\{\nicefrac{2|c_1|^2 }{c_0} , 4\}}\right]
\! .
\end{equation}
Then note that
Lemma~6.3 in \citet{jentzen2019strong} shows that for all $v,w\in H_{\nicefrac12}$ it holds that $F(v+w)\in H$ and that
\begin{align} \label{burgers:cor:coer}
\begin{split}
& \left< v, F( v + w ) \right>_H 
	\\
&\leq \max\big\{\tfrac{2|c_1|^2 }{c_0} , 4\big\} \| v \|_H^2 \big[\!\sup\nolimits_{x \in (0,1)} |\und{w}(x)|^2\big]+ \tfrac{3}{4}\| (-A)^{\nicefrac{1}{2}}v \|^2_{ H } 
	\\
& \quad + \max\big\{\tfrac{2|c_1|^2 }{c_0} , 4\big\} \left[1+ \!\sup\nolimits_{x \in (0,1)} |\und{w}(x)|^{ \max\{\nicefrac{2|c_1|^2 }{c_0} , 4\}}\right]
	\\
& \leq \phi(w) \| v \|^2_H+ \tfrac{3}{4} \| (-A)^{\nicefrac{1}{2}} v \|^2_{ H }+ \Phi( w ).
\end{split}
\end{align}
Moreover, observe that Lemma~6.4 in \citet{jentzen2019strong} demonstrates that for all $v, w \in H_{\nicefrac{1}{8}}$ it holds that
\begin{align} \label{burgers:cor:con}
\begin{split}
&\|F(v) - F(w) \|_{H_{-1 /2}}
	 \leq 
|c_1|  |c_0|^{-\nicefrac{1}{2}} \left[\sup\nolimits_{u \in  H_{ 1/8 } \setminus \{0\}} \tfrac{\|u\|^2_{L^4(\lambda_{(0,1)}; \R)}}{\|(-A)^{\nicefrac{1}{8}} u\|^2_H} \right] 
	\\
	& \quad \cdot
\big( 1+ \|(-A)^{\nicefrac{1}{8}} v \|_{ H} +  \| (-A)^{\nicefrac{1}{8}} w \|_{ H} \big) \|(-A)^{\nicefrac{1}{8}}( v - w) \|_{ H}\\
& \leq \theta \big( 1+ \left\| v \right\|_{ H_{1/8 }} +  \left\| w \right\|_{ H_{1/8 }} \big) \left\| v - w \right\|_{ H_{1/8 }}.
\end{split}
\end{align}	
Furthermore, note that Lemma~\ref{lem:normHr:Burgers} assures that for all $v\in H_{\nicefrac12}$ it holds that $F(v)\in H$ and 
\begin{align}
\label{eq:F:gamma}
\| F(v) \|_{H_{-\gamma}} \leq c \|v\|^2_H.
\end{align}
In addition, observe that Lemma~5.6 in \citet{jentzen2019strong} (with $p=q$ in the notation of Lemma~5.6 in \citet{jentzen2019strong}) proves that there exist a real number $\eta \in [0,\infty)$ and stochastic processes $O \colon [0, T] \times \Omega \to H_{\varrho}$, ${\mathcal{Q}}^n, {\mathfrak{Q}}^n \colon [0, T] \times \Omega \to P_n(H)$, $n \in \N$, with continuous sample paths  which satisfy for all $n \in \N$, $t \in [0, T]$ that 
$
[O_t]_{\P, \mathcal{B}(H)} =  \int_0^{t}  e^{(t-s)A} \,  dW_s
$,
$
[{\mathcal{Q}}^n_t]_{\P, \mathcal{B}(H)} =  \int_0^{t} P_n \, e^{(t-s)A} \, dW_s
$,
$
{\mathfrak{Q}}^n_t= {\mathcal{Q}}_t^n + P_n  e^{t A} \xi- \int_0^t e^{(t-s)(A-\eta)} \, \eta \left( {\mathcal{Q}}_s^n + P_n  e^{s A} \xi\right) ds
$,
\begin{align}\label{eq:O:as}
\P \bigg( \limsup_{m \to \infty} \sup_{s \in [0, T]} \big\| (O_s + e^{s A}  \xi) - ({\mathcal{Q}}_s^m + P_m  e^{sA}  \xi )\big\|_{H_{\varrho}} \!=0 \bigg) \! =1,
\end{align}
that
\begin{multline}\label{eq:strong:limsup}
\limsup_{ m \to \infty} 
\bigg( \E\bigg[ \int_0^T 
\exp \left( \smallint_r^T q\,\phi\big( {\mathfrak{Q}}_{\lf u \rf_{h_m} }^m \big) \, du \right) \, 
\max\Big\{ \big|\Phi({\mathfrak{Q}}_{ \lf r \rf_{h_m} }^m )\big|^{q/2},1, \big\|{\mathfrak{Q}}_r^m\big\|_H^q,  \\
\smallint\nolimits_{0}^T \big\| {\mathcal{Q}}_u^m+ P_m  e^{u A} \xi \big\|_{H_{\varrho}}^{2q+ 2q\vartheta} \, du \, \Big\} \, dr  \bigg] + \sup_{ s \in [0,T]} \E[ \| {\mathfrak{Q}}_s^m \|_H^q]  \bigg) < \infty,
\end{multline}
and that
\begin{multline}\label{eq:X^n}
\P \Big( \Y_t^n = \smallint_0^t P_n \,  e^{  ( t - s ) A } \, \one_{ \{ \| \Y_{ \lf s \rf_{h_n} }^n \|_{ H_{\varrho} } + \| {\mathcal{Q}}_{ \lf s \rf_{h_n} }^n +P_n  e^{ \lf s \rf_{ h_n } A } \xi \|_{ H_{\varrho} } \leq | h_n|^{ - \chi } \}} \, F \big(  \Y_{ \lf s \rf_{ h_n } }^n \big) \, ds \\
+ P_n  e^{ t A } \xi  + {\mathcal{Q}}_t^n \Big)=1.
\end{multline}
Combining this with \eqref{burgers:cor:coer}--\eqref{eq:F:gamma}, as well as
Item~\eqref{item:cor:solution} and Item~\eqref{item:strong} in
Corollary~\ref{cor:strong} 
(with
 $ \mathbb{H} = \{ e_k \in H \colon k \in \mathbb{N} \} $,
 $\mathbb{H}_n = \{ e_k \in \mathbb{H} \colon k \in \{1, \ldots , n-1, n\}\}$, 
 $ \vartheta = 1$, 
 $ \epsilon=0$,
 $ \varphi=  \nicefrac{3}{4}$, 
 $ \alpha =\nicefrac{1}{2} $,
 $ \rho= \nicefrac{1}{8}$, $\varrho=\varrho$,
 $\mathcal{O}^n = ( [0,T] \times \Omega \ni (t,\omega) \mapsto ({\mathcal{Q}}^n_t( \omega ) + P_n  e^{t A} \xi) \in H_{\varrho} ) 
$,     
 $ O = ( [0,T] \times \Omega \ni (t,\omega) \mapsto (O_t( \omega ) + e^{tA}  \xi) \in H_{\varrho}) $,
 $ \mathbb{O}^n = ( [0,T] \times \Omega \ni (t,\omega) \mapsto {\mathfrak{Q}}^n_t( \omega )  \in H_{\varrho} ) $,
 $n\in \N$, 
 $ p=q$
 in the notation of Corollary~\ref{cor:strong}) 
we obtain  that for all $t\in [0,T]$ equations \eqref{cor:strong:burgers:bochner} and \eqref{cor:strong:burgers:X} hold and that for all $u\in (0, q)$ it holds that
\begin{align}
\limsup_{n \to \infty} \sup_{t \in [0,T]} \E \big[ \| X_t -\Y_t^n \|_H^{u} \big] = 0.
\end{align}
This, in particular,  establishes \eqref{cor:strong:burgers:strong}.
The proof of Corollary~\ref{cor:strong:burgers} is thus completed.
\end{proof}

{\bf Acknowledgments:} 
The authors would like to warmly thank Arnulf Jentzen for fruitful discussions, guidance, and support. 
This project has been supported by the Deutsche Forschungsgesellschaft (DFG) via RTG
2131 \emph{High-dimensional Phenomena in Probability -- Fluctuations and Discontinuity} and through the research grant 
with the title 
\emph{Higher order numerical approximation methods 
for stochastic partial differential equations}
(Number 175699)
from the Swiss National Science Foundation (SNSF).
One of the author thanks the (Labex CEMPI) Centre Europ\'een pour les Math\'ematiques, la Physique et leurs interactions (ANR-11-LABX-0007-01) for financial support.

\addcontentsline{toc}{section}{Bibliography}
\bibliographystyle{abbrvnat}
\bibliography{bibfile}

\end{document}